\def\rm#1{\mathrm{#1}}
\def\cal#1{\mathcal{#1}}
\def\bb#1{\mathbb{#1}}
\def\lie#1{\mathfrak{#1}}
\def\lr#1{\left\langle #1 \right\rangle}
\def\lrr#1{\langle #1 \rangle}
\newtheorem{theorem}{Theorem}[section]
\newtheorem{theoremA}{Theorem}
\newtheorem{lemma}[theorem]{Lemma}
\newtheorem{conj}{Conjecture}
 \newtheorem{cor}[theorem]{Corollary}
 \newtheorem{lem}[theorem]{Lemma}
 \newtheorem{prop}[theorem]{Proposition}
\theoremstyle{definition}
\newtheorem{definition}[theorem]{Definition}
\newtheorem{example}[theorem]{Example}
\newtheorem{case}{Case}
\theoremstyle{remark}
\newtheorem{remark}[theorem]{Remark}
\numberwithin{equation}{section}
\begin{document}
\title{On  Riemannian Foliations over Positively Curved Manifolds}
\author{L. D. Speran\c ca}
\address{Departamento de Matem\'atica, UFPR \\
 Setor de Ci\^encias Exatas, Centro Polit\'ecnico \\
 Caixa Postal 019081,  CEP 81531-990 \\
 Curitiba, PR, Brazil}
\email{lsperanca@ufpr.br}

\subjclass[2010]{  MSC 57R60\and MSC 57R50}

\date{}

\begin{abstract}We prove that, under reasonable conditions, odd co-dimension Riemannian foliations cannot occur in positively curved manifolds. 
  \end{abstract}

\maketitle

\section{Introduction}

Given a compact Riemannian manifold $(M,g)$, a foliation $\cal F$ in $ (M,g)$ is said to be Riemannian if its leaves are locally equidistant. We prove

\begin{theoremA}\label{mainthm} Let $\cal F$  be an odd-codimensional Riemannian foliation with bounded holonomy on a compact manifold $M$. Then $M$ has a plane with nonpositive sectional curvature.
\end{theoremA}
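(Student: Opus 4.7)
I would argue by contradiction: assume every $2$-plane in $TM$ has strictly positive sectional curvature, and derive a contradiction from the foliation data. The plan combines Molino's structure theorem (made available by the bounded holonomy hypothesis) with a parity argument on the normal bundle of odd rank, and closes with an O'Neill-type computation on a mixed plane.

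The first step is to set up the Molino framework. Under the bounded holonomy hypothesis, the closures of the leaves of $\mathcal{F}$ form a singular Riemannian foliation $\bar{\mathcal{F}}$ whose structural transverse Lie algebra is that of a compact Lie group $H \subset O(2k+1)$; after lifting to the orthonormal frame bundle of the normal bundle $\nu(\mathcal{F})$, the lifted foliation is transversely parallelizable with manifold leaf space. The upshot is that, up to a finite cover, one has an orbifold Riemannian submersion $\pi : M \to W$ with compact fibers and transverse dimension $q' \leq 2k+1$, to which O'Neill's $A$- and $T$-tensor formulas apply.

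The core observation is the parity obstruction from odd codimension. Since $\nu(\mathcal{F})$ has odd rank $2k+1$, the linear action of a maximal torus $T \subset H$ on each transversal $\mathbb{R}^{2k+1}$ splits off a fixed subspace of positive odd dimension: the nonzero $T$-weight spaces are all $2$-dimensional, so their orthogonal complement has dimension congruent to $2k+1 \pmod 2$, hence is at least $1$. Globalizing this pointwise observation via the transverse parallelism should produce a basic line field in $\nu(\mathcal{F})$ invariant under the structural action, and hence (possibly after a double cover) a nowhere-zero basic vector field $N$ whose flow preserves $\mathcal{F}$ by isometries of $M$.

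Finally, I would consider a $2$-plane $\sigma$ spanned by $N$ and a vector $V$ tangent to a leaf of $\bar{\mathcal{F}}$, and apply the O'Neill mixed-plane formula for $K_M(N,V)$: the $T$-invariance of the direction of $N$ together with its basic character should force the $A$- and $T$-tensor contributions to cancel and yield $K_M(\sigma) \leq 0$, contradicting strict positivity. I expect the main obstacle to be precisely the global construction of $N$ in the previous paragraph: although the odd-dimensional fixed subspace exists pointwise, smooth globalization requires the maximal torus and its fixed subspace to extend coherently over $M$. The bounded holonomy hypothesis should deliver exactly this via Molino's transverse parallelism, but the execution likely forces one to work first on a principal stratum of $\bar{\mathcal{F}}$ and then extend by density.
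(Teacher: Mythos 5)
There is a genuine gap, and it sits exactly where a Berger-type argument has to do its work: the final curvature computation. In the Gray--O'Neill vertizontal formula the term $\|A^*_X\xi\|^2$ enters with a \emph{plus} sign and the derivative term $\lr{(\nabla_XS)_X\xi,\xi}$ has no sign in general, so your assertion that for the plane spanned by $N$ and $V$ ``the $A$- and $T$-tensor contributions should cancel and yield $K_M(\sigma)\le 0$'' is not a computation but a hope. Any proof of this type needs two separate inputs: (a) an extremal point of some norm function, to force the derivative term to have the right sign, and (b) a linear-algebra reason for the $A$-term to \emph{vanish}. Your sketch supplies neither. In particular, you spend the odd-codimension hypothesis on the wrong object: its correct use is that, for a fixed vertical vector $\nu$, the map $X\mapsto A^*_X\nu$ is a skew-symmetric endomorphism of the odd-dimensional horizontal space $\mathcal{H}$ and therefore has nontrivial kernel, which is what kills the $A$-term at the extremal point; a torus-fixed direction in the normal bundle does nothing of the sort. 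Relatedly, you misplace the bounded holonomy hypothesis: Molino's structure theory and the compactness of the \emph{transverse} holonomy are available for any Riemannian foliation on a compact manifold and do not need it. In this paper bounded holonomy is a statement about the \emph{longitudinal} holonomy fields (vertical solutions of $\nabla_{\dot c}\xi=-A^*_{\dot c}\xi-S_{\dot c}\xi$), and its role is to make the function $h\mapsto\|(h^*)^{-1}\nu_0\|^2$ on the holonomy groupoid $\mathcal{E}_p$ bounded, so that a near-supremum can stand in for the critical point that may not exist.

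For contrast, the paper's route is: introduce dual holonomy fields $\nu$ (solutions of $\nabla_{\dot c}\nu=-A^*_{\dot c}\nu+S_{\dot c}\nu$), for which the curvature formula reads
\begin{equation*}
K(\dot\gamma,\nu)=\tfrac{1}{2}\tfrac{d^2}{dt^2}\|\nu\|^2-3\|S_{\dot\gamma}\nu\|^2+\|A^*_{\dot\gamma}\nu\|^2;
\end{equation*}
use bounded holonomy to take a sequence in $\mathcal{E}_p$ nearly maximizing $\|\nu\|^2$, and extract a nonzero $\nu$ with $K(X,\nu)\le\|A^*_X\nu\|^2$ for every horizontal $X$; then conclude by the skew-symmetry argument above. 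Finally, the global line field $N$ you need is the step you yourself flag as the obstacle, and it is a real one: fixed subspaces of maximal tori of pointwise-varying holonomy groups do not assemble into a continuous distribution, and the transverse parallelism lives on the orthonormal frame bundle of the normal bundle, not on $M$. As written, the proposal does not close.
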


We say that a foliation has \textit{bounded holonomy} if there is a constant that uniformly bounds the norms of all  holonomy fields with unit initial condition. This class contains principal and associated bundles with compact structure group together with all regular homogeneous foliations defined by proper group actions  (section \ref{sec:bounded}).  \\

A particular case of Theorem \ref{mainthm} is interesting in its own right: the vertical warping of a connection metric. Its proof also helps understanding Theorem \ref{mainthm}. Let $\cal F$  be a  Riemannian foliation in $ (M,g_0)$ with totally geodesic leaves. For any  basic function $\phi$, we consider a new metric
\begin{gather}
g_\phi(X+\xi,X+\xi)=g_0(X,X)+e^{2\phi}g_0(\xi,\xi),
\end{gather}
for $X$  horizontal and $\xi$ vertical. 
We prove
\begin{theoremA}\label{thm:0} If $M$ is compact and $g_\phi$ has positive sectional curvature, then $\cal F$ has a fat point. I.e., there is a point $p\in M$ where the image of the O'Neill tensor, $A_X^*\xi$, is non-zero for all non-zero horizontal $X$  and vertical $\xi$.
\end{theoremA}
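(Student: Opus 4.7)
The plan is to argue by contradiction using a point where the basic function $\phi$ attains its global minimum. By compactness of $M$ there is such a point $p$, and at $p$ one has $d\phi=0$ while the Hessian $\operatorname{Hess}(\phi)_p$ is positive semi-definite as a symmetric bilinear form on $T_pM$. I will show that this point $p$ is itself forced to be a fat point of $\cal F$.

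The main tool is a sectional curvature formula for ``vertizontal'' 2-planes of $g_\phi$, i.e. planes spanned by one horizontal and one vertical vector. Since the leaves are totally geodesic in $g_0$ and $\phi$ is basic, a direct Koszul computation---using that $d\phi$ is horizontal and that the leaves remain totally geodesic under vertical warping by a basic function---should produce, for unit horizontal $X$ and unit vertical $\xi$ with respect to $g_\phi$, an identity of the shape
\begin{equation*}
K_{g_\phi}(X,\xi)=c\,e^{-2\phi}\,|A^*_X\xi|_{g_0}^2 \;-\; \operatorname{Hess}(\phi)(X,X)\;-\;d\phi(X)^2,
\end{equation*}
for some positive constant $c$ whose exact value is irrelevant, where $A$ is the O'Neill tensor of $g_0$. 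The essential structural feature is that the warping contributes $-\operatorname{Hess}(\phi)(X,X)$ together with a perfect square in $d\phi(X)$, so at any critical point of $\phi$ at which $\operatorname{Hess}(\phi)$ is positive semi-definite, the $\phi$-contribution is non-positive.

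Applying the formula at the minimum $p$, where $d\phi(X)=0$ and $\operatorname{Hess}(\phi)(X,X)\ge 0$ for every $X$, yields $K_{g_\phi}(X,\xi)_p\le c\,e^{-2\phi(p)}|A^*_X\xi|^2$. If $\cal F$ had no fat point, one could pick non-zero horizontal $X_0$ and vertical $\xi_0$ at $p$ with $A^*_{X_0}\xi_0=0$; the formula would then force $K_{g_\phi}(X_0,\xi_0)_p\le 0$, contradicting the positivity of $K_{g_\phi}$. Hence $p$ is a fat point.

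The main technical obstacle is the curvature formula itself: carrying out the Koszul calculation for $g_\phi$ carefully enough to isolate the warping contribution in the stated form (in particular, to see that no mixed term $d\phi(X)\cdot\langle A^*_X\xi,\cdot\rangle$ survives to upset the sign of the Hessian term), and verifying along the way that totally geodesic leaves of $g_0$ remain totally geodesic for $g_\phi$ as soon as $\phi$ is basic. Once this formula is in hand, compactness of $M$ plays only the mild role of guaranteeing that $\phi$ attains a global minimum.
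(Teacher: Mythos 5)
Your strategy coincides with the paper's: both evaluate the vertizontal curvatures of $g_\phi$ at a global minimum of $\phi$ (which exists by compactness) and use $d\phi=0$ together with positive semi-definiteness of $\mathrm{Hess}\,\phi$ there to force $A^*_X\xi\neq 0$ for all nonzero $X,\xi$. The one step you defer --- the curvature identity, which you correctly flag as the main obstacle --- does hold with the sign structure you predict, and the paper obtains it without any Koszul computation: since $g_0$ and $g_\phi$ share the same horizontal distribution they share the same holonomy fields, and differentiating $g_\phi(\xi,\eta)=e^{2\phi}g_0(\xi,\eta)$ (with $g_0(\xi,\eta)$ constant along horizontal curves, the leaves being totally geodesic for $g_0$) gives $S^\phi_X=-d\phi(X)\,\mathrm{id}$; at a critical point of $\phi$ this vanishes and the Gray--O'Neill vertizontal formula reduces to \eqref{eq:Kphi}, namely $K(\xi,\dot c)=-\tfrac{1}{2}\|\xi\|_0^2\,\mathrm{Hess}\,\phi(\dot c,\dot c)+\|A^*_{\dot c}\xi\|^2_\phi$, from which the conclusion follows exactly as you argue.
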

Theorem \ref{thm:0} has two straightforward implications:
\begin{cor}\label{cor:01}
If $\cal F$ has odd codimension, $g_\phi$ has a nonpositively curved plane.
\end{cor}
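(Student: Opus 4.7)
The plan is to argue by contradiction using Theorem \ref{thm:0}. Suppose $g_\phi$ had strictly positive sectional curvature everywhere. Then Theorem \ref{thm:0} produces a fat point $p\in M$, i.e.\ $A_X^*\xi\neq0$ for every nonzero horizontal $X\in\cal H_p$ and every nonzero vertical $\xi\in\cal V_p$. I would then show that fatness is algebraically impossible when $\dim\cal H_p$ is odd, which contradicts the assumption that $\cal F$ has odd codimension.

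The key step is to fix any nonzero vertical vector $\xi$ and consider the linear endomorphism
\begin{equation*}
B_\xi\colon \cal H_p\to\cal H_p,\qquad B_\xi(X)=A_X^*\xi.
\end{equation*}
The O'Neill tensor of a Riemannian foliation satisfies $A_X Y=-A_Y X$ for horizontal vectors $X,Y$ (this follows from $A_XY=\tfrac12[X,Y]^{\cal V}$ for horizontal lifts in the locally-submersion picture). Combined with the defining adjoint relation $g_0(A_XY,\xi)=g_0(A_X^*\xi,Y)$, this gives
\begin{equation*}
g_0(B_\xi X,Y)=g_0(A_X^*\xi,Y)=g_0(A_XY,\xi)=-g_0(A_YX,\xi)=-g_0(B_\xi Y,X),
\end{equation*}
so $B_\xi$ is skew-adjoint with respect to $g_0$ on $\cal H_p$.

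Because $\dim\cal H_p$ is odd, any skew-adjoint operator on $\cal H_p$ has $\det B_\xi=(-1)^{\dim\cal H_p}\det B_\xi=-\det B_\xi$, hence $\det B_\xi=0$. Therefore $B_\xi$ has a nontrivial kernel, producing a nonzero horizontal $X$ with $A_X^*\xi=0$. This directly contradicts the fatness of $p$ given by Theorem \ref{thm:0}. Consequently $g_\phi$ cannot have strictly positive sectional curvature, so some plane must be nonpositively curved.

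I don't anticipate a real obstacle here: the whole point of the corollary is to convert the analytic conclusion of Theorem \ref{thm:0} into an algebraic parity obstruction via the skew-symmetry of $A$. The only thing to be careful about is verifying that the skew-symmetry $A_XY=-A_YX$ genuinely holds in the Riemannian foliation setting (not merely for Riemannian submersions), but this is standard because $\cal F$ is locally given by Riemannian submersions and $A$ is tensorial.
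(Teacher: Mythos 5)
Your argument is correct and is essentially the paper's own proof: Theorem \ref{thm:0} yields a fat point under positive curvature, and the paper likewise notes that for fixed $\xi$ the map $X\mapsto A_X^*\xi$ is skew-symmetric on $\cal H_p$, hence has nontrivial kernel in odd dimensions, contradicting fatness. Your write-up merely spells out the skew-adjointness and determinant parity step that the paper leaves as a one-line remark.
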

\begin{cor}\label{cor:02}
If $g_\phi$ is positively curved, then the dimension of $M$ is smaller then twice the co-dimension of $\cal F$.
\end{cor}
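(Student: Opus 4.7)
My plan is to deduce Corollary \ref{cor:02} directly from Theorem \ref{thm:0} by a short linear algebra argument at the fat point, exploiting the skew-symmetry $A_XY=-A_YX$ of the O'Neill tensor in its horizontal arguments. Positive curvature of $g_\phi$ produces a point $p\in M$ at which $A_X^*\xi\neq 0$ whenever $X$ is a nonzero horizontal vector and $\xi$ a nonzero vertical vector; write $H_p$ and $V_p$ for the horizontal and vertical subspaces of $T_pM$. The whole argument takes place in $T_pM$.

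Next I would fix any nonzero $X_0\in H_p$ and introduce the linear map
\[
L_{X_0}\colon V_p\longrightarrow H_p,\qquad L_{X_0}(\xi)=A_{X_0}^{*}\xi.
\]
Fatness at $p$ is precisely the statement that $L_{X_0}$ has trivial kernel and is therefore injective. On its own this already gives $\dim V_p\leq\dim H_p$, which is the non-strict version of the desired bound.

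The observation that upgrades this to a strict inequality is the identity $A_{X_0}X_0=0$: a direct computation yields
\[
\lr{L_{X_0}(\xi),X_0}=\lr{A_{X_0}^{*}\xi,X_0}=\lr{\xi,A_{X_0}X_0}=0,
\]
so the image of $L_{X_0}$ lies inside the hyperplane $X_0^{\perp}\cap H_p$, of dimension $\dim H_p-1$. Combining this with injectivity gives $\dim V_p\leq\dim H_p-1$, i.e., $\dim M\leq 2\,\mathrm{codim}\,\cal F-1$, and hence $\dim M<2\,\mathrm{codim}\,\cal F$.

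The only delicate point is manufacturing the extra $-1$ that upgrades the bound from non-strict to strict; a bare application of fatness yields only $\dim V_p\leq\dim H_p$. Noticing that the image of $L_{X_0}$ must be perpendicular to $X_0$---a direct consequence of the antisymmetry of $A$ in its horizontal slots---is what supplies the missing codimension. This route has the advantage of avoiding any appeal to Corollary \ref{cor:01} or to parity considerations on $\mathrm{codim}\,\cal F$.
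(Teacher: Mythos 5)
Your proof is correct and is precisely the linear-algebra deduction from the fat point of Theorem \ref{thm:0} that the paper leaves implicit (the paper only spells out the skew-symmetry argument for Corollary \ref{cor:01}, stating that both corollaries are ``direct consequences of linear algebra''): fixing $X_0$, injectivity of $\xi\mapsto A_{X_0}^*\xi$ plus the fact that its image lies in $X_0^\perp\cap\cal H_p$ gives $\dim\cal V_p\leq\dim\cal H_p-1$. One cosmetic caveat: your symbol $H_p$ for the horizontal space collides with the paper's use of $H_p$ for the structure group of $\tau_p$, so you should write $\cal H_p$ instead.
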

The first corollary is a strictly weaker version of Theorem \ref{mainthm}. The second states that Wilhelm's conjecture holds for $g_\phi$.

Theorem \ref{mainthm} and Corollary \ref{cor:01} generalize the classical result of Berger on zeros of Killing fields (\cite[pg. 193]{p}). The proof follows along the same lines, but requires two different tools. The main tool is \textit{dual holonomy fields} (sections \ref{sec:posintro} and \ref{sec:dhol}), analogous to the virtual Jacobi fields in \cite{verdianiziller}. The second tool is an auxiliary space,  the \textit{groupoid of infinitesimal holonomy transformations} (section \ref{sec:holbun}).

\subsection{Notation}\label{sec:not} We mostly use the notation of \cite{gw}. We follow the usual nomenclature in Riemannian foliations, calling vectors tangent to leaves  \textit{verticals} and vectors orthogonal to leaves \textit{horizontals}. We denote the set of vertical vectors at $p\in M$ by $\cal V_p$ and the horizontals as $\cal H_p$. They define the vector bundles $\cal V$ and $\cal H$, respectively. The upper indexes $^h$ and $^v$ denote orthogonal projection to $\cal H$ and $\cal V$, respectively. Holonomy fields are denoted by $\xi$ and $\eta$, dual holonomy fields by $\nu$. Vertical vectors are also denoted  by $\xi$ and $\nu$ whenever the context is free from ambiguity, or by $\xi_0$ and $\nu_0$ otherwise.  All inner products and covariant derivatives are in $M$. We only consider non-singular foliations, i.e., all leaves have the same dimension.

\subsection{Holonomy and Dual Holonomy Fields}\label{sec:posintro}

Given a horizontal curve $c$ on $M$, a vector field $\xi$  is called a \textit{holonomy field} if it is  vertical and satisfies
\begin{gather}\label{eq:holfield}
\nabla_{\dot c}\xi=-A^*_{\dot c}\xi-S_{\dot c}\xi
\end{gather}
(compare \cite{gg} or \cite[pg. 17]{gw}). Holonomy fields are natural generalizations of action fields in principal bundles:  if $\cal F$ is given  by a principal bundle, all holonomy fields are obtained by restricting the action fields to horizontal curves (Proposition \ref{prop:bound2}). In this case, if $M$ is compact, we can find an uniform bound for the norm of all holonomy fields with unit initial data. 

Holonomy fields are entirely defined by the horizontal distribution $\cal H$ and do not sense the metric along the leaves. 

A \textit{dual holonomy field} is a vertical field $\nu$ that satisfies
\begin{gather}\label{eq:predual}
\nabla_{\dot c}\nu=-A^*_{\dot c}\nu+S_{\dot c}\nu.
\end{gather}

Dual holonomy fields do sense the metric along the leaves. They appear in an interesting way in problems of integration along horizontal directions (as in \cite[Theorem 1]{DS-A-flat}).

Although we ask both holonomy and dual holonomy fields to be vertical, a vector field satisfying $\nabla_{\dot c}^h\xi=-A^*_{\dot c}\xi$ is vertical as long as it is vertical at a point (Remark \ref{rem:oncurves}). 
In particular, fixed a curve, holonomy and  dual holonomy fields are in one-to-one correspondence to their vertical initial data.

In contrast to virtual Jacobi fields (whose norms can explode), dual holonomy fields are  well behaved and give an interesting expression for the sectional curvature (Propositions \ref{prop:dualhol} and \ref{prop:KnuX}).
On the other hand, both constructions are closely related: in \cite{verdianiziller}, the authors define virtual Jacobi fields based on a choice of  Lagrangian subspace of Jacobi fields with respect to a natural symplectic form. Such Lagrangian spaces coincide with the $(n-1)$-dimensional family of Jacobi fields  with self-adjoint Ricatti operator in \cite{wilkilng-dual} or \cite[pg. 45]{gw}. Our dual holonomy fields are constructed in the same fashion, but using an isotropic subspace instead of a Lagrangian one. \\

In opposition to action fields, both holonomy fields and dual holonomy fields are, in principle, defined only along curves. This leads to the second tool: we introduce the groupoid of \textit{infinitesimal holonomy transformations} as an auxiliary space, where we can construct global objects similar to action fields,  whose restrictions to curves give rise to all holonomy and dual holonomy fields. \\

The precise definition of bounded holonomy follows below. An example of Riemannian foliation is given by the fibers of a Riemannian submersion. In this case, an important object that arises from $\cal F$ is its \textit{holonomy group} (\cite[pg. 13]{gw}). For instance,  if this group is compact, the submersion enjoys interesting properties (\cite{pro-flats,ss-hol,tapp-bounded-sub,tapp-vol-grwoth}). The condition of bounded holonomy is slightly more general and make sense for foliations. In section \ref{sec:bounded}, we prove that  it is satisfied for the common classes of Riemannian foliations mentioned in the beginning.
 
\begin{definition}\label{def:bhol}
We say that a Riemannian foliation has bounded holonomy if there is a constant $L$ such that,  for every holonomy field $\xi$, $||\xi(1)||\leq L||\xi(0)||$.
\end{definition}

When $M$ is compact,  this condition only depends on the horizontal distribution and not on all the structures invoked. We can also observe that a Riemannian foliation has bounded holonomy if and only if its dual holonomy fields satisfies the bound in Definition \ref{def:bhol} (Lemma \ref{lem:bound}). This is how the boundedness of holonomy comes into the proof of Theorem \ref{mainthm}.
\vspace{0.2cm}

The rest of the paper is divided in five sections. In section \ref{sec:1} we prove Theorem \ref{thm:0}. In section \ref{sec:holbun}, we construct the groupoid of infinitesimal holonomy transformations and in section \ref{sec:dhol} the dual holonomy fields. In section \ref{sec:proof1} we prove the main theorem and in section \ref{sec:tech} we explore further the hypothesis and methods. The boundedness of holonomy is only used on section \ref{sec:proof1}.\\

\section{Proof of Theorem \ref{thm:0}}\label{sec:1}

In a vertical warping of a connection metric, the function $\phi$ plays the role of the norm of the Killing field in the proof of Berger result (\cite[pg. 193]{p}). We prove Theorem \ref{thm:0} using Gray-O'Neill's formula to compute the vertizontal curvatures of $M$ at a maximum of $\phi$.
\vspace{0.2cm}

Since holonomy fields  of the metrics $g_0$ and $g_\phi$ coincide (these two metrics have the same horizontal distribution), we can explicitly compute the $S$-tensor in the metric $g_\phi$.  If $\xi$ and $\eta$ are holonomy fields along a horizontal curve $c$, 
\[g_{\phi}({\xi(t),\eta(t)})=e^{2\phi}g_0({\xi(t),\eta(t)})=e^{2\phi}g_0({\xi(0),\eta(0)})\,,\]
since the $g_0$-inner product of two holonomy fields is constant. The  $S$-tensor associated to  $g_\phi$, $S^\phi$,  is determined by
\[2g_\phi({S^\phi_{\dot c}\xi,\eta}) =-\frac{d}{dt}g_\phi({\xi,\eta}) =-\frac{d}{dt}[e^{2\phi}]g_0({\xi,\eta})=2g_\phi({-d\phi(\dot c)\xi,\eta}).\]
In particular, at a minimum of $\phi$, $S^\phi\equiv0$ and the equation for the unreduced sectional curvature of $\dot c$ and $\xi$ (see \cite[pg. 28]{gw} or \ref{eq:noK}) becomes
\begin{gather}\label{eq:Kphi}
 K(\xi,\dot{c})=-\frac{1}{2}||\xi||^2_0\rm{Hess}\,\phi(\dot c,\dot c)+||A^*_{\dot c}\xi||^2_\phi~.
\end{gather}
In addition, $\rm{Hess}\,\phi$ is nonnegative at this point. Therefore, if $K$ is positive, $||A^*_{\dot c}\xi||_\phi^2$ must be  nonzero for every  nonzero pair $\dot{c},\xi$. \\

Corollaries \ref{cor:01} and \ref{cor:02} are direct consequences of  linear algebra. For instance, if $\xi$ is fixed, the map $X\mapsto A_X^*\xi$ is skew-symmetric, consequently, it has  nontrivial kernel when $\rm{dim}~\cal H$ is odd. 

In general, the Hessian that appears in \eqref{eq:Kphi} is replaced by $\lr{(\nabla_XS)_X\xi,\xi}$, which is more difficult to control. The aim of the next sections  is to construct a function that plays the role of $\phi$ in a general Riemannian foliation. We introduce this function in section \ref{sec:proof1}. It is not defined on $M$ but in a subset of the groupoid of infinitesimal holonomy transformations.
\vspace{0.2cm}

Theorems \ref{mainthm}, \ref{thm:0} and \ref{thm:AS} suggest that  positively curved foliations may carry \textit{fat }objects. We state a conjecture along these lines:

\begin{conj}[Strong Wilhelm's Conjecture] Let $\cal F$ in $ M$ be a Riemannian foliation on a compact manifold with positive curvature. Then, it has a horizontal vector $X$ such that $A^*_X$ is injective.
\end{conj}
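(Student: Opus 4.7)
The plan is to promote the strategy of Theorem \ref{thm:0} to the setting of an arbitrary Riemannian foliation by replacing the basic warping function $\phi$ with an auxiliary function $\Phi$ on the groupoid $\cal G$ of infinitesimal holonomy transformations, and by replacing ordinary holonomy fields with dual holonomy fields in the Gray--O'Neill curvature identity. The governing heuristic is the passage from \eqref{eq:Kphi} to the general vertizontal curvature expression in which $\mathrm{Hess}\,\phi$ is replaced by $\lr{(\nabla_XS)_X\nu,\nu}$: the goal is to arrange for this ``Hessian-type'' term to have a sign at a distinguished element of $\cal G$, so that positivity of $K$ feeds directly into $\|A_X^*\nu\|^2$.

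First, I would use Proposition \ref{prop:KnuX} to write the vertizontal sectional curvature in the form
\[
K(X,\nu)=\mathrm{(Hessian\text{-}type\ term)}+\|A_X^*\nu\|^2,
\]
where $\nu$ is a dual holonomy field along the horizontal curve tangent to $X$. I would then construct $\Phi\colon\cal G\to\bb R$ so that its Hessian along horizontal lifts reproduces precisely the Hessian-type contribution for every choice of $(X,\nu)$. Once this is in place, compactness of $M$ together with bounded holonomy should let $\Phi$ achieve a maximum on each relevant compact piece of $\cal G$, yielding a maximizer $\gamma\in\cal G$ with footpoint $p\in M$ at which the Hessian-type term is nonpositive. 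Positivity of $K$ then forces $\|A_X^*\nu\|^2>0$ for every nonzero vertical initial datum $\nu_0\in\cal V_p$ and the admissible horizontal $X\in\cal H_p$ associated to $\gamma$. Using Remark \ref{rem:oncurves} and the bijection between dual holonomy fields and their vertical initial conditions, this translates into $A_X^*\colon\cal V_p\to\cal H_p$ being injective, which is the conclusion sought.

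The main obstacle is the construction of $\Phi$ itself. In the warped case of Theorem \ref{thm:0} the function $\phi$ is handed to us as a basic function on $M$, while in the general case one is forced to fabricate a surrogate on the groupoid and then verify that its second-order behavior along horizontal lifts exactly matches $\lr{(\nabla_XS)_X\nu,\nu}$; the author already flags this term as ``more difficult to control''. A related subtlety is that the maximum of $\Phi$ lives in $\cal G$ and must be transported to a genuine pair $(p,X)$ on $M$, where bounded holonomy is expected to play the same role it plays in Theorem \ref{mainthm}, namely preventing the dual holonomy transport from carrying $\Phi$ to infinity. Finally, unlike the odd-codimension argument of Corollary \ref{cor:01}, there is no free linear-algebraic input (skew-symmetry of $X\mapsto A_X^*\xi$ for fixed $\xi$) to exploit, so the entire burden falls on the existence of $\Phi$ with the right Hessian identity; absent such a function, the strongest statement this method seems to give is the weaker plane existence of Theorem \ref{mainthm}, which is presumably why the full conjecture remains open.
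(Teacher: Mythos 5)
The statement you are trying to prove is explicitly labeled a \emph{conjecture} in the paper; the author offers no proof of it, only the remark that Theorems \ref{mainthm}, \ref{thm:0} and \ref{thm:AS} ``suggest that positively curved foliations may carry fat objects.'' So there is nothing in the paper to compare your argument against, and your proposal, as you yourself concede in its final sentence, does not close: it is a strategy outline whose central ingredient --- a function $\Phi$ on the groupoid (the paper's $\cal E$, not $\cal G$) whose second derivative along horizontal lifts reproduces $\lr{(\nabla_XS)_X\nu,\nu}$ --- is never constructed. The paper does construct such a surrogate, namely $\rho_{\nu_0}(h)=||\bar\zeta(h,\nu_0)||^2$ in section \ref{sec:proof1}, so that part of your plan is not the real obstruction.

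The genuine gap is a quantifier mismatch that your sketch glosses over. The function $\rho_{\nu_0}$ depends on the choice of initial vector $\nu_0$, and ``maximizing'' it (Theorem \ref{thm:max}) produces, for that one $\nu_0$, a single dual holonomy vector $\nu$ satisfying $K(X,\nu)\leq||A^*_X\nu||^2$ for \emph{all} $X\in\cal H_{\pi(\nu)}$. That is the order ``$\exists\,\nu\ \forall X$,'' which combined with the skew-symmetry of $X\mapsto A^*_X\nu$ yields Theorem \ref{mainthm} in odd codimension. The Strong Wilhelm Conjecture requires the opposite order: a single $X$ such that $A^*_X\nu\neq 0$ for \emph{all} $\nu$ at one point. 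In Theorem \ref{thm:0} this reversal is free because the warping function $\phi$ is a single function on $M$ independent of $\xi$, so at its minimum the Hessian term is controlled simultaneously for every vertizontal pair; in the general case the maximizers of $\rho_{\nu_0}$ for different $\nu_0$ live at different elements of $\cal E_p$ (indeed possibly over different points of $M$), and nothing in your proposal forces them to coalesce. Your closing step, ``positivity of $K$ then forces $||A^*_X\nu||^2>0$ for every nonzero $\nu_0$,'' therefore does not follow from the maximization you describe. Absent a new idea to obtain the Hessian sign uniformly in $\nu_0$ at one point, the method delivers only Theorem \ref{mainthm}, which is exactly why the statement remains a conjecture.
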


\section{The Infinitesimal Holonomy Bundle}\label{sec:holbun}

Here we present an auxiliary space used in the proof of Theorem \ref{mainthm}. The author believes that the language of groupoids is a natural language to introduce this object (see sections \ref{sec:Esmooth} and \ref{sec:AS}).

Foliations usually do not provide holonomy diffeomorphisms between leaves (as in the case of Riemannian submersions - \cite[pg. 12]{gw}). However, infinitesimal data can be recovered from holonomy fields.
For a horizontal curve $c:[0,1]\to M$, we define $h:\cal V_{c(0)}\to \cal V_{c(1)}$ as the linear isomorphism given by $h(\xi_0)=\xi(1)$, where $\xi(t)$ is the holonomy field along $c$ with initial condition $\xi(0)=\xi_0$. We call  $h$ an \textit{infinitesimal holonomy transformation}.

Another way to recover the infinitesimal data is to use local horizontal lifts. Let $\cal F$  be a Riemannian foliation on a complete Riemannian manifold $M$. Recalling that $\cal F$ is locally a Riemannian submersion, for every horizontal curve $c$, we can find a neighborhood $U$ of $c(0)$ inside the leaf and a map $\psi:U\times[0,1]\to M$ such that:
\begin{enumerate}
\item $\psi(x,0)=x$;
\item the map $x\mapsto \frac{d}{dt}\psi(x,t)$ defines a basic vector field for every $t$.
\end{enumerate}
We can always consider $d\psi_{(c(0),1)}:\cal V_{c(0)}\to \cal V_{c(1)}$  as an infinitesimal holonomy transformation, independent of the choice of $U$. It is easy to see that the set of such holonomy transformations  coincide with the set of holonomy transformations defined in the first paragraph.

Let $\cal E$ be the collection of all infinitesimal holonomy transformations defined by $\cal F$. $\cal E$ is naturally included in 
\begin{gather}
\rm{Aut}(\cal V)=\{h:\cal V_p\to\cal V_q~|~p,q\in M, h\in\rm{Iso}(\cal V_p,\cal V_q)\} ,
\end{gather}
where $\rm{Iso}(\cal V_p,\cal V_q)$ stands for the set of linear isomorphisms between $\cal V_p$ and $\cal V_q$. The natural operations on $\rm{Aut}(\cal V)$ defines a groupoid structure, the source and target maps  being defined on $h:\cal V_p\to \cal V_q$ by $\sigma(h)=p$ and $\tau(h)=q$, respectively. Moreover, $\cal E$ is closed by  composition and inversion in $\rm{Aut}(\cal V)$: if $h:\cal V_p\to \cal V_q$ is realized by the horizontal curve $c$ and $h':\cal V_q\to \cal V_r$ is realized by $c'$, then $h'\circ h$ is realized by the concatenation of $c$ and $c'$; $h^{-1}$ is realized by the curve $\tilde c$ defined by $\tilde c(t)=c(1-t)$.
The identity section $p\mapsto \rm{id}_{\cal V_p}$ is realized by constant curves.

We endow $\rm{Aut}(\cal V)$ with the topology defined by the submersion $\sigma\times \tau:\rm{Aut}(\cal V)\to M\times M$ along with the operator norm on $\rm{Iso}(\cal V_p,\cal V_q)$ induced by the metric on $M$.  The space $\cal E$  inherits a topology and a groupoid structure from $\rm{Aut}(\cal V)$.  Smoothness and related questions are discussed in section \ref{sec:tech}, but neither topology nor differentiability  will be used in the proof of Theorem \ref{mainthm}.
\vspace{0.2cm}

As  usual for groupoids, the restriction of $\tau$ to an \textit{orbit} of $\cal E$ defines a principal bundle: let $p\in M$ and denote $\cal E_p=\sigma^{-1}(p)\cap\cal E$. Then $\tau_p=\tau|_{\cal E_p}$ defines a principal bundle over $\tau(\sigma^{-1}(p))\cap\cal E=L^\#_p$, the dual leaf through $p$ (\cite{wilkilng-dual} or \cite[pg. 40]{gw}). The structure group of $\tau_p$, which we denote by $H_p$, is the set of infinitesimal holonomy transformations realized by closed horizontal loops based at $p$.

\begin{remark}\label{rem:oncurves}
In contrast to \cite{wilkilng-dual}, we consider holonomy fields along general horizontal curves instead of  broken geodesics. We prefer to deal with the generality of smooth curves since we believe that it expresses better the non-Riemannian nature of (infinitesimal) holonomy transformations. With smooth curves, we still recover the set of infinitesimal holonomy transformations defined by broken geodesics: every broken geodesic can be made a smooth curve by a reparametrization that makes its velocity flat  at cusps - from the second paragraph of this section, we see that infinitesimal holonomy transformations do not depend on the parametrization of their realizing curves.

To further justify how we are considering general curves, we observe that equation \eqref{eq:holfield} together with a vertical initial data defines a vertical vector field.
As mentioned, this fact is true for any field satisfying  $\nabla^h_{\dot c} \xi=-A^*_{\dot c}\xi$  (in particular, dual holonomy fields).

To prove that $\nabla^h_{\dot c} \xi=-A^*_{\dot c}\xi$ preserves verticality, observe that  $\nabla_{\dot c}^vZ=A_{\dot c}Z$ for every horizontal field $Z$, therefore,
\begin{align*}
\frac{d}{dt}\lr{\xi,Z}=\lr{\xi, \nabla_{\dot c}Z}+\lr{\nabla_{\dot c}^h\xi,Z}
=\lr{\xi, A_{\dot c}Z}+\lrr{\xi, \nabla^h_{\dot c}Z}-\lr{A^*_{\dot c}\xi,Z}=\lrr{\xi, \nabla^h_{\dot c}Z}.
\end{align*}
On the other hand, one can always obtain a horizontal frame of vector fields  satisfying $\nabla_{\dot c}^hZ=0$ (for instance, recall that $\cal F$ in $ M$ is locally a Riemannian submersion and use horizontal lifts of parallel vector fields.)
\end{remark}

\subsection{Examples}\label{rem:examples}Let us give a brief idea of  $\cal E_p$  in some cases:
	
\begin{case}\label{example1}
If $\cal F$ is given by the fibers of a Riemannian submersion $\bar \pi: M\to B$, its holonomy group  at $\bar \pi(p)$ acts via  diffeomorphisms on the fiber  $F=\bar \pi^{-1}(\bar \pi(p))$ (see \cite[pg. 13]{gw}). In this case, $H_p$ coincides  with the image of the isotropy representation of the holonomy group at $\cal V_p=T_pF$.
\end{case}
\begin{case}\label{example2}
If $\bar{\pi}:M=P\stackrel{\bar \pi}{\to} B$ is a principal $G$-bundle, holonomy fields are restrictions of action fields to horizontal curves (see Lemma \ref{lem:Killingfol} for a proof), i.e.,  if  $\hat \xi$ is an action field and $c$ is a horizontal curve, $\xi(t)= \hat{\xi}(c(t))$ is a holonomy field along $c$. In this case, $H_p$ is trivial since
\begin{gather}\label{eq:principalhol}
\xi(0)=\hat \xi(c(0))=\hat \xi(c(1))=\xi(1)
\end{gather}
for every closed horizontal loop $c$. Furthermore, $\bar{\pi}\circ \tau_p:\cal E_p\to B$ is isomorphic to the bundle reduction defined by the connection on $P$ (denoted by $P(p)$ in \cite[II.7, Theorem 7.1]{knI}),  which coincides with $L^\#_p$.
\end{case}
\begin{case}\label{example3}
More generally, for $\bar\pi:M\to B$ a Riemannian submersion with totally geodesic fibers, $\bar \pi\circ \tau_p:\cal E_p\to B$ is isomorphic to the \textit{holonomy bundle} of the restriction $\bar \pi_p=\bar \pi|_{L^\#_p}:L_p^\#\to B$: in general, we can define a principal bundle $\bar P$ by gathering all holonomy diffeomorphisms with domain $F$.  When the submersion has totally geodesic fibers, all elements of $\bar P$ are inside the bundle defined by all isometries from $F$ to any fiber (the last bundle is denoted by $P$ in \cite[Theorem 2.7.2]{gw}). Moreover, the last bundle inherits a natural connection from $\bar \pi$ and $\bar P$ coincides with its connection reduction passing through the identity. The structure group of $\bar P$ is the holonomy group of $\bar \pi$ at $\bar \pi(p)$, which we denote here by $G$.

To see how $\cal E_p$ realizes the holonomy bundle, assume that $M=L^\#_p$ and observe that $G$ acts naturally on $\cal E_p$: if $g\in G$ and $h\in \cal E_p$ are realized by the curves $\beta$ and $c$ respectively, we set $h\cdot g$ as the transformation realized by the concatenation of $\beta$ followed by the lif of $\bar{\pi}\circ c$ to $\beta(p)$. Equivalently, $h\cdot g=\tilde h\circ dg_p$, where $\tilde h$ is the infinitesimal holonomy transformation defined by the lift of $\bar \pi\circ c$ to $g(p)$. This action is free, since $g$ is an isometry: $\tilde h\circ dg_p=h$ if and only if $g(p)=p$ and $dg_p=\rm{id}$, therefore  $g=\rm{id}$.
\end{case}

\begin{case}\label{example4}
The arguments in Case \ref{example3} remain valid whenever the holonomy group of the submersion is compact (see Theorem \ref{thm:2}). 
\end{case}

These facts are the main motivations for defining $\cal E$ and $\cal E_p$.

\subsection{The Natural Action and Horizontal Lifts}\label{sec:horlift}The groupoid $\cal E$  acts naturally on vertical vectors. As we shall see, this action gives rise to all holonomy and dual holonomy fields. Let $\pi:\cal V\to M$ be the bundle of vertical vectors and define
\begin{align}\label{eq:action}
\zeta:\cal E {_\sigma\times_\pi} \cal V&\to \cal V\\\nonumber
(h,\xi)&\mapsto h(\xi)\,,
\end{align}
where $\cal E {_\sigma\times_\pi}\cal V$ 
is the usual fibered product:
\[\cal E {_\sigma\times_\pi} \cal V=\{(h,\xi)\in \cal E\times \cal V~|~ \sigma(h)=\pi(\xi)\}.\]

We easily observe that the restriction $\zeta|_{\cal E_p\times \cal V_p}:\cal E_p\times \cal V_p\to \cal V$ defines $\cal V$ as a linear bundle associated to $\cal E_p$.

Holonomy fields give  a natural way to lift horizontal curves from $M$ to $\cal E$. For any horizontal curve $c:[0,1]\to M$, we define $\hat c:I\to \cal E$ as
\[\hat c(t)\xi_0= \xi(t),\]
where $\xi$ is the holonomy field along $c$ with initial condition $\xi(0)=\xi_0$. Observe that $\sigma(\hat c(t))=c(0)$ and $\tau(\hat c(t))=c(t)$.

Given $h\in \cal E_p$ and a horizontal curve $c$ starting at $\tau(h)$, we define the \textit{$\tau_p$-horizontal lift of $c$ at $h$} as $\hat c_h(t)=\hat c(t)h$. It follows immediately from the definitions that all holonomy fields on $L_p^\#$ are recovered by these lifts. We state it as a proposition.

\begin{prop}\label{prop:hol}
Let $h\in \cal E_p$ and $c$ be a horizontal curve with $c(0)=\tau(h)$.  Then
\begin{enumerate}
\item Given $\xi_0\in \cal V_p$,  $\xi(t)=\zeta(\hat c_h(t),\xi_0)$ is a holonomy field along $c$;
\item Given a holonomy field $\xi$ along $c$, then $\xi(t)=\zeta(\hat c_h(t),h^{-1}(\xi(0))$.
\end{enumerate}
\end{prop}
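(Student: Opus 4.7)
The proof is essentially a direct unwinding of the definitions, so the plan is to trace through exactly what $\hat c_h(t)$, $\zeta$, and $\hat c(t)$ mean and invoke uniqueness for the ODE \eqref{eq:holfield}. The main conceptual point is that $\hat c_h(t) = \hat c(t)\circ h$ factors through the vertical space $\cal V_{c(0)} = \cal V_{\tau(h)}$, so applying it to a vector $\xi_0\in \cal V_p$ amounts to first transporting $\xi_0$ by $h$ to $\cal V_{c(0)}$ and then letting $\hat c(t)$ act as the defining holonomy isomorphism along $c$.

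For part (1), I would set $\eta_0 = h(\xi_0) \in \cal V_{c(0)}$ and observe that by definition
\[
\zeta(\hat c_h(t),\xi_0) = \hat c_h(t)\xi_0 = \hat c(t)\bigl(h(\xi_0)\bigr) = \hat c(t)\eta_0.
\]
The construction of $\hat c$ in the paragraph preceding the proposition says precisely that $\hat c(t)\eta_0 = \eta(t)$, where $\eta$ is the holonomy field along $c$ with initial condition $\eta(0)=\eta_0$. Thus $t\mapsto \zeta(\hat c_h(t),\xi_0)$ is the holonomy field along $c$ with initial data $h(\xi_0)$, proving (1).

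For part (2), given an arbitrary holonomy field $\xi$ along $c$, I apply (1) to the specific choice $\xi_0 = h^{-1}(\xi(0)) \in \cal V_p$. Then $\zeta(\hat c_h(t),\xi_0)$ is a holonomy field along $c$, and at $t=0$ it equals
\[
\hat c_h(0)\xi_0 = \hat c(0)\bigl(h(\xi_0)\bigr) = \rm{id}_{\cal V_{c(0)}}\bigl(\xi(0)\bigr) = \xi(0),
\]
using that $\hat c(0)$ is the identity because the constant curve realizes the identity transformation. Since \eqref{eq:holfield} is a linear first-order ODE along $c$, a holonomy field is determined by its initial value (uniqueness for linear ODEs, also used implicitly in Remark \ref{rem:oncurves}). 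Therefore $\zeta(\hat c_h(t),h^{-1}(\xi(0))) = \xi(t)$ for all $t$, giving (2).

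There is no genuine obstacle here; the statement is close to tautological once one accepts that $\hat c$ is well defined as a family of linear isomorphisms and that holonomy fields enjoy existence and uniqueness with prescribed vertical initial data. The only care needed is bookkeeping of the source/target: one must check that $h^{-1}(\xi(0))$ makes sense, which uses $\sigma(h)=p$ and $\tau(h)=c(0)$, so that $h:\cal V_p\to \cal V_{c(0)}$ is the isomorphism that allows transporting the initial data of $\xi$ back to the fiber over $p$.
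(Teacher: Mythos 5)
Your proof is correct and matches the paper's treatment: the paper offers no separate argument, stating only that the proposition ``follows immediately from the definitions,'' and your unwinding of $\hat c_h(t)=\hat c(t)\circ h$ together with uniqueness of holonomy fields with prescribed initial data is exactly that immediate verification.
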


\subsection{Bounded Holonomy}\label{sec:bounded}
With Proposition \ref{prop:hol}  at hand, we can give sufficient conditions for bounded holonomy. 
We begin with an alternative characterization of bounded holonomy.

\begin{lem}\label{lem:opbound}
$\cal F$ has bounded holonomy if and only there is a constant $L$ that bounds the operator norm of all elements in $\cal E$. That is, if $h\in\cal E$, $||h||\leq L$.
\end{lem}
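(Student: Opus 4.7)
The plan is to unwind definitions: the lemma is essentially tautological once one recognizes that elements of $\cal E$ are, by construction, precisely the time-$1$ evaluations of holonomy fields along horizontal curves parametrized on $[0,1]$. Under this identification, the operator norm of $h\in\cal E$ equals the worst ratio $||\xi(1)||/||\xi(0)||$ taken over holonomy fields along a curve realizing $h$.

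First I would record the identification. For $h\in\cal E$ with $\sigma(h)=p$, Proposition \ref{prop:hol} exhibits a horizontal curve $c$ realizing $h$; after reparametrizing to have domain $[0,1]$ (which preserves $h$ because \eqref{eq:holfield} is linear in $\dot c$, so reparametrized holonomy fields are just time-rescales of the original ones), one obtains
\[
||h||=\sup_{\xi_0\in\cal V_p\setminus\{0\}}\frac{||h(\xi_0)||}{||\xi_0||}=\sup_{\xi_0\ne 0}\frac{||\xi(1)||}{||\xi(0)||},
\]
where $\xi$ is the holonomy field along $c$ with $\xi(0)=\xi_0$. From this, both implications fall out immediately. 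If Definition \ref{def:bhol} holds with constant $L$, then each ratio on the right is $\leq L$, so $||h||\leq L$ for every $h\in\cal E$. Conversely, if $||h||\leq L$ for all $h\in\cal E$, then given a holonomy field $\xi$ along a horizontal $c:[0,1]\to M$, the element $\hat c(1)\in\cal E$ sends $\xi(0)$ to $\xi(1)$ by Proposition \ref{prop:hol}, and the operator bound yields $||\xi(1)||\leq L||\xi(0)||$.

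I foresee no genuine obstacle; the one point that needs to be noted is parametrization-invariance of the assignment $c\mapsto \hat c(1)$, which is immediate from the linearity of \eqref{eq:holfield} in $\dot c$ (also used in Remark \ref{rem:oncurves}) and is what ensures the equivalence of the $[0,1]$-parametrized hypothesis of Definition \ref{def:bhol} with a statement about all of $\cal E$, whose realizing curves may a priori be defined on arbitrary intervals.
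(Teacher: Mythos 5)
Your proof is correct and amounts to the same definitional unwinding the paper intends: the paper in fact states Lemma \ref{lem:opbound} without any proof, treating it as immediate from the fact that elements of $\cal E$ are exactly the maps $\xi_0\mapsto\xi(1)$ for holonomy fields along horizontal curves on $[0,1]$. Your extra remark on parametrization-invariance is a sensible precaution and is already covered by Remark \ref{rem:oncurves}.
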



Moreover, the operator norm is continuous in the topology induced by $\rm{Aut}(\cal V)$, since it is continuous in $\rm{Aut}(\cal V)$. 

\begin{prop}\label{prop:bound1}
Let $M$ be compact and $\cal F$  be the Riemannian foliation given by the fibers of a Riemannian submersion $\bar \pi:M\to B$ with compact structure group. Then $\cal F$  has bounded holonomy.
\end{prop}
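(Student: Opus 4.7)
The strategy is to build a second metric on $\cal V$, uniformly equivalent to $g|_{\cal V}$ by compactness of $M$, in which every infinitesimal holonomy transformation is an isometry; bounded holonomy will then follow from Lemma \ref{lem:opbound}.

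First I would exploit compactness of the structure group $G$: averaging an arbitrary Riemannian metric on the model fiber $F$ over $G$ produces a $G$-invariant metric $\tilde g$ on $F$. Next, using local trivializations of $\bar \pi$ whose transition functions lie in $G$, I would transport $\tilde g$ fiberwise to obtain a metric $\tilde g_M$ on the vertical distribution $\cal V$. Because the transition functions are $G$-isometries of $\tilde g$, the local prescriptions agree on overlaps, so $\tilde g_M$ is a well-defined smooth fiberwise metric on $\cal V$.

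Using the holonomy-bundle description recalled in Cases \ref{example3}--\ref{example4} (Theorem \ref{thm:2}), every element $h\in \cal E$ is realized, through the local trivializations, by an element of $G$; since $G$ preserves $\tilde g$, we have $\|h(\xi)\|_{\tilde g_M}=\|\xi\|_{\tilde g_M}$ for every vertical $\xi$. Finally, $\cal V\to M$ is a continuous vector bundle over the compact space $M$, so the fiberwise norms $\|\cdot\|_g$ and $\|\cdot\|_{\tilde g_M}$ are uniformly equivalent: there exist constants $C_1,C_2>0$ with $C_1\|\xi\|_{\tilde g_M}\leq \|\xi\|_g\leq C_2\|\xi\|_{\tilde g_M}$ throughout $\cal V$. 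Combining these two facts gives $\|h(\xi)\|_g\leq (C_2/C_1)\|\xi\|_g$ uniformly in $h\in\cal E$, which by Lemma \ref{lem:opbound} is precisely the bounded-holonomy condition.

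The main obstacle is the identification in the penultimate step: one must confirm that every infinitesimal holonomy transformation in $\cal E$ actually comes from an element of $G$ via the trivializations, i.e., that parallel transport along horizontal curves stays within the structure group. In the totally geodesic case this is classical (Case \ref{example3}); for a general Riemannian submersion with compact holonomy group it relies on Theorem \ref{thm:2}, and it is here that compactness of $G$ is essential, since it is what allows the closure of the holonomy pseudogroup to be a Lie group acting smoothly on $F$ with a common invariant metric.
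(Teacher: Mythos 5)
Your proof is essentially correct, but it takes a genuinely different route from the paper's. The paper argues directly on operator norms: it writes every $h\in\cal E$ as $(d\psi^y)^{-1}\,d\psi\, d\psi^{y'}$, where $\psi^y$ is the holonomy displacement along a minimizing geodesic from a fixed basepoint $x\in B$ to $y$ and $\psi$ lies in the holonomy group at $x$; the first and last factors are uniformly bounded because the geodesics have length at most $\mathrm{diam}(M)$ (via \cite[Proposition 2.2]{tapp-bounded-sub}), and the middle factor is bounded because the holonomy group sits inside a compact group acting on the compact fiber. You instead average a fiber metric over the compact structure group, push it to $\cal V$ through trivializations, observe that all of $\cal E$ acts by isometries of this auxiliary fiberwise metric, and convert back to the original metric by uniform equivalence over the compact base. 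Your version buys a cleaner conceptual statement (it is in effect the construction of the metric $g_1$ in Theorem \ref{thm:2}, specialized to this setting, plus compactness of $M$) and avoids the quantitative input from Tapp; the paper's version needs no auxiliary metric and no discussion of whether the trivializations are compatible with the horizontal distribution.

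One point deserves a correction. The step you flag as the main obstacle --- that every infinitesimal holonomy transformation is realized by an element of $G$ in the trivializations --- should not be attributed to Theorem \ref{thm:2}: that theorem concerns irreducible foliations with regular holonomy, and its forward direction \emph{assumes} bounded holonomy, so invoking it here risks circularity. The fact you need is more elementary and is precisely the paper's decomposition: build the local trivializations out of holonomy displacements $\psi^y$ from the basepoint; then the displacement along any horizontal curve, read in these charts, is $(\psi^y)^{-1}\circ P_c\circ\psi^{y'}$, a loop holonomy at $x$, hence an element of the (closure of the) holonomy group, which by hypothesis lies in the compact group $G$. With that substitution your argument closes completely.
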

\begin{proof}
Since $M$ is compact, every point in $B$ can be connected to a given point $x$ by a geodesic whose length is less then the diameter of $M$. Denote the holonomy diffeomorphism defined by a length minimizing geodesic that connects $x$ to $y$ by $\psi^y$ (the choice of the geodesic is irrelevant). Since the length of the geodesics are uniformly bounded, so are $d\psi^y$ (\cite[Proposition 2.2]{tapp-bounded-sub}). The same is true for the differential of any element in the holonomy group, since the last is contained in a compact structure group. That is,  the differentials  $d\psi$ are bounded for all $\psi$ in the holonomy group at $x$.

The proposition follows from the fact that every infinitesimal holonomy transformation can be decomposed as $h=(d\psi^y)^{-1}d\psi d\psi^{y'}$ for some $\psi$ in the holonomy group, $y=\pi(\tau(h))$ and $y'=\sigma(h)$.
\end{proof}

When the foliation is given by a family of Killing fields, we prove that all holonomy fields are the restriction of Killing fields in the family. This done, the desired bound is given in  terms of the norms of the elements in the family.

\begin{lem}\label{lem:Killingfol}
Let $\cal F$  be a Riemannian foliation defined by the action of a Lie algebra $\lie m$ of Killing vector fields. Then, for every $\hat{\xi}\in \lie m$ and every horizontal curve $c$, $\xi(t)=\hat{\xi}(c(t))$ is a holonomy field.
\end{lem}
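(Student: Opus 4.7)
The plan is to verify the defining ODE \eqref{eq:holfield} for $\xi(t)=\hat\xi(c(t))$ directly, by decomposing $\nabla_{\dot c}\hat\xi$ into its horizontal and vertical components and matching each against $-A^*_{\dot c}\xi$ and $-S_{\dot c}\xi$ respectively. Verticality of $\xi$ is immediate: every $\hat\xi\in\lie m$ is, by hypothesis, tangent to the leaves of $\cal F$.

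For the horizontal projection, no use of the Killing condition is needed. Given any horizontal field $Y$ along $c$, differentiating $\lr{\hat\xi,Y}=0$ and using that $\hat\xi$ is vertical gives $\lr{\nabla_{\dot c}\hat\xi,Y}=-\lr{\hat\xi,\nabla_{\dot c}Y}=-\lr{\hat\xi,v\nabla_{\dot c}Y}=-\lr{\hat\xi,A_{\dot c}Y}=-\lr{A^*_{\dot c}\xi,Y}$, so $h\nabla_{\dot c}\xi=-A^*_{\dot c}\xi$. This is in fact a general fact about vertical extensions of $\xi$ and is not special to Killing fields.

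For the vertical projection I would invoke the Killing equation. Skew-symmetry of $\nabla\hat\xi$ gives, for any vertical $V$ at $c(t)$, $\lr{\nabla_{\dot c}\hat\xi,V}=-\lr{\dot c,\nabla_V\hat\xi}=-\lr{\dot c,h\nabla_V\hat\xi}$, the last equality because $\dot c$ is horizontal. Since $\hat\xi$ is also vertical, $h\nabla_V\hat\xi$ is the second fundamental form of the leaf through $c(t)$ applied to $(V,\hat\xi)$, which under the standard identification $\lr{S_X V,W}=\lr{X,h\nabla_V W}$ (for horizontal $X$ and vertical $V,W$) equals $\lr{S_{\dot c}\xi,V}$. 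Hence $v\nabla_{\dot c}\xi=-S_{\dot c}\xi$, and combining both projections yields \eqref{eq:holfield}.

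The computation is short and the only subtlety I anticipate is pinning down the sign conventions for $A^*$ and $S$ from \cite{gw}; once those are fixed, the proof reduces to one application of the Killing equation grafted onto the general identity for the horizontal projection of $\nabla_{\dot c}$ on a vertical field. No compactness, no uniform estimates, and no uniqueness argument for the ODE are needed, since we have produced a candidate field that satisfies the equation termwise.
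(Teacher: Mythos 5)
Your proof is correct, and it reaches the two required identities $\nabla^h_{\dot c}\xi=-A^*_{\dot c}\xi$ and $\nabla^v_{\dot c}\xi=-S_{\dot c}\xi$ by a route that differs from the paper's in the vertical step. The paper flows the curve $c$ under the one-parameter group $\Xi_\theta$ of $\hat\xi$ (using that a Killing flow preserving $\cal V$ also preserves $\cal H=\cal V^\perp$, so the variation stays horizontal), obtains a horizontal extension $X$ of $\dot c$ with $[\hat\xi,X]=0$, and then reads off $\nabla^v_X\hat\xi=\nabla^v_{\hat\xi}X=-S_X\hat\xi$ from torsion-freeness and the definition of $S$. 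You instead use the infinitesimal Killing condition directly: skew-symmetry of $\nabla\hat\xi$ converts $\lr{\nabla_{\dot c}\hat\xi,V}$ into $-\lr{\dot c, h\nabla_V\hat\xi}$, which you identify with $-\lr{S_{\dot c}\xi,V}$ via the second fundamental form of the leaf (tacitly using its symmetry, equivalently the self-adjointness of $S_{\dot c}$ on $\cal V$ -- worth stating). Your horizontal step is identical in substance to the paper's parenthetical remark and to the computation in Remark \ref{rem:oncurves}: it is a general fact about vertical fields along horizontal curves and uses nothing about $\hat\xi$ being Killing. What each approach buys: yours is purely pointwise, needs no flow and no completeness of $\hat\xi$'s integral curves, and isolates exactly where the Killing hypothesis enters; the paper's makes explicit the geometric picture of holonomy fields as variation fields of families of horizontal curves, which is the picture exploited elsewhere (e.g.\ in Cases \ref{example2} and \ref{example3} and in the construction of $\cal E$). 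Both hinge on the same sign conventions $S_XV=-\nabla^v_VX$ and $\lr{A^*_X\xi,Y}=\lr{\xi,A_XY}$ from \cite{gw}, which you correctly flag as the only delicate point.
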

\begin{proof}
Let $\Xi_\theta$ be the flow of $\hat \xi\in \lie m$. $\cal V$ is preserved by $d\Xi_\theta$ since $\cal V$ is spanned by  the Lie algebra  $\lie m$. Since $\hat{\xi}$ is Killing, $d\Xi_\theta$ also preserves the horizontal distribution. Therefore, if $c$ is a horizontal curve starting at $p$, $\psi(\theta,t)=\Xi_\theta(c(t))$ is a collection of horizontal curves. Furthermore,  $X=\frac{\partial \psi}{\partial t}$ is a horizontal vector field along the image of $\psi$ which satisfies $[\hat \xi,X]=0$. In particular,  $\nabla^v_X\xi=\nabla^v_{\xi}X=-S_X \xi$, which   proves that  $\xi(t)$ satisfies equation \eqref{eq:holfield} ($\nabla^h_X\xi=-A^*_X\xi$ since $\xi$ is vertical.)
\end{proof}

As a corollary of Lemma \ref{lem:Killingfol}, we get $H_p=\{\rm{id}\}$. If $M$ has only one dual leaf, we see that $\cal E=M\times M$, which is a compact subset of $\rm{Aut}(\cal V)$ if $M$ is compact (thus bounding the operator norms of elements of $\cal E$). A proof for the general case is given below.

\begin{prop}\label{prop:bound2}
Let $M$ be compact and $\cal F$  be a  Riemannian foliation defined by a Lie algebra $\lie m$ of Killing fields. Then $\cal F$  has bounded holonomy.
\end{prop}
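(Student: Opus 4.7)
The plan is to combine Lemma \ref{lem:Killingfol} with a compactness argument. By that lemma, every Killing field in $\lie m$ restricts along any horizontal curve $c:[0,1]\to M$ to a holonomy field; and by uniqueness of solutions to the linear ODE \eqref{eq:holfield}, two elements of $\lie m$ that agree at $c(0)$ produce the same holonomy field along $c$. So a holonomy field $\xi$ is completely determined by any $\hat\xi\in\lie m$ realizing its initial value, and its terminal value equals $\hat\xi(c(1))$. Thus it suffices to pick a canonical representative $\hat\xi\in\lie m$ with $\hat\xi(c(0))=\xi(0)$ and to control $||\hat\xi(c(1))||$ in terms of $||\hat\xi(c(0))||$ uniformly in $c$.

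To do this I would fix an auxiliary Euclidean norm $||\cdot||_{\lie m}$ on the finite-dimensional space $\lie m$ and study the bundle morphism
\[\rm{ev}:M\times\lie m\to\cal V,\qquad \rm{ev}(p,\hat\xi)=\hat\xi(p).\]
This map is fiberwise surjective since $\lie m$ spans $\cal F$, and $\ker\rm{ev}$ has constant rank $\dim\lie m-\dim\cal V$ by the non-singularity of $\cal F$. Hence $K=\ker\rm{ev}$ is a subbundle of the trivial bundle $M\times\lie m$, its orthogonal complement $K^\perp\subset M\times\lie m$ is a subbundle, and the restriction $\rm{ev}|_{K^\perp}:K^\perp\to\cal V$ is a vector-bundle isomorphism. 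Compactness of $M$ then yields constants $C_1,C_2>0$ such that
\[C_2\,||\hat\xi||_{\lie m}\leq ||\hat\xi(p)||\leq C_1\,||\hat\xi||_{\lie m}\]
for every $p\in M$ and every $\hat\xi\in K^\perp_p$. Picking the (unique) $\hat\xi\in K^\perp_{c(0)}$ with $\hat\xi(c(0))=\xi(0)$ then produces
\[||\xi(1)||=||\hat\xi(c(1))||\leq C_1\,||\hat\xi||_{\lie m}\leq \frac{C_1}{C_2}\,||\hat\xi(c(0))||=\frac{C_1}{C_2}\,||\xi(0)||,\]
so $L=C_1/C_2$ is the required uniform bound.

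The only nontrivial step I anticipate is recognizing $K=\ker\rm{ev}$ as a genuine subbundle, which boils down to constancy of $\dim\cal V_p$---precisely the non-singularity hypothesis on $\cal F$ recorded in Section \ref{sec:not}. Once this is in hand, the remainder is elementary linear algebra and an extraction of uniform bounds from compactness of $M$.
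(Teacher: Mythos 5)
Your argument is correct and essentially the paper's own proof: the paper likewise restricts the evaluation map $e_p:\lie m\to\cal V_p$ to $(\ker e_p)^{\perp}$ (your $K^\perp$) and extracts the uniform bound from compactness, there packaged as the maximum of the continuous function $r(p,\hat\xi)=\max_{q\in M}||\hat\xi(q)||$ on the sphere bundle of that complement, with Lemma \ref{lem:Killingfol} supplying the identification of holonomy fields with restricted Killing fields exactly as you use it. One small quantifier to tidy: your displayed two-sided estimate is stated for $\hat\xi\in K^\perp_p$ evaluated at the same point $p$, yet you apply the upper bound at $c(1)$ to a $\hat\xi\in K^\perp_{c(0)}$ --- this is harmless, since $||\hat\xi(q)||\leq C_1||\hat\xi||_{\lie m}$ for all $q\in M$ and all $\hat\xi\in\lie m$ already follows from compactness (the operator norms of $e_q$ are bounded), but the inequality should be stated in that form.
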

\begin{proof}
Given $p\in M$, denote by $e_p:\lie m\to \cal V_p$ the evaluation map and fix an inner product on $\lie m$. Since $\lie m$ spans the vertical space at every point, the space 
\[E=\{(p,\hat{\xi})\in M\times \lie m~|~\hat{\xi}\in (\ker e_p)^{\bot} \}\]
defines a smooth vector bundle over $M$. We further endow it with the metric induced by $e_p$.  Denote the sphere bundle of $E$ by $S(E)$. We can define a function $r:S(E)\to \bb R$ as $r(p,\hat \xi)=\max_{q\in M}||\hat \xi(q)||$. Since $r$ is continuous and $S(E)$ compact, it has a maximum $L$. Lemma \ref{lem:Killingfol} guarantees that this is the desired bound.
\end{proof}

\section{Dual Holonomy Fields}\label{sec:dhol}

When leaves are totally geodesic manifolds,  the inner product between two holonomy fields is always constant.  In this case, one can identify holonomy fields as their own duals. In any other situation, dual holonomy fields are introduced to play this role. We give three equivalent characterizations of these objects.
\begin{prop}\label{prop:dualhol}
Let $\nu$ be a vertical field along a horizontal curve $ c$ on $M$. Then, the following conditions are equivalent:
\begin{enumerate}
\item For any holonomy field $\xi$, $\lr{\xi(t),\nu(t)}$ is constant;
\item If $\hat c_{h}$ is a $\tau_p$-horizontal lift, then $\nu(t)=\zeta((\hat {c}_h(t)^*)^{-1},h^{*}(\nu(0)))$;
\item $\nabla_{\dot c}\nu=-A^*_{\dot c}\nu+S_{\dot c}\nu$.
\end{enumerate}
We call a vertical field satisfying any of these conditions as a dual holonomy field.
\end{prop}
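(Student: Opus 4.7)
The plan is to prove the three conditions equivalent as a two-cycle: first (1) $\Leftrightarrow$ (3) by differentiating the inner product $\lr{\xi,\nu}$ and exploiting the orthogonality of the $A^*$-term together with the symmetry of $S$ on verticals, and then (1) $\Leftrightarrow$ (2) by rewriting (1) in the adjoint form with respect to the linear action of $\hat c_h(t)$ on $\cal V_p$. No single step looks genuinely hard, but some care is needed to separate the vertical and horizontal components of $\nabla_{\dot c}\nu$ in the equation (3).

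For (1) $\Leftrightarrow$ (3), given any holonomy field $\xi$ I would compute
\[\frac{d}{dt}\lr{\xi,\nu}=\lr{\nabla_{\dot c}\xi,\nu}+\lr{\xi,\nabla_{\dot c}\nu}=\lr{\xi,\nabla_{\dot c}\nu-S_{\dot c}\nu},\]
using \eqref{eq:holfield} together with the facts that $A^*_{\dot c}$ sends verticals to horizontals (so $\lr{A^*_{\dot c}\xi,\nu}=0$ since $\nu$ is vertical) and $S_{\dot c}$ is self-adjoint on verticals. If (3) holds, the right hand side equals $-\lr{\xi,A^*_{\dot c}\nu}=0$ by the same orthogonality, giving (1). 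Conversely, demanding the derivative to vanish for every initial condition $\xi_0\in\cal V_{c(0)}$ — and using that holonomy fields span $\cal V_{c(t)}$ at each $t$ — forces $(\nabla_{\dot c}\nu-S_{\dot c}\nu)^v=0$. The matching horizontal identity $(\nabla_{\dot c}\nu)^h=-A^*_{\dot c}\nu$ is not imposed by (1); it is automatic from the verticality of $\nu$ by the computation recorded in Remark \ref{rem:oncurves}. Putting the two components together reconstitutes (3).

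For (1) $\Leftrightarrow$ (2) I would invoke Proposition \ref{prop:hol}: for every $\xi_0\in\cal V_p$ the field $\xi(t)=\zeta(\hat c_h(t),\xi_0)=\hat c_h(t)(\xi_0)$ is a holonomy field along $c$ with initial value $h(\xi_0)$. Taking inner products gives
\[\lr{\xi(t),\nu(t)}=\lr{\xi_0,\hat c_h(t)^*\nu(t)},\qquad \lr{\xi(0),\nu(0)}=\lr{\xi_0,h^*\nu(0)}.\]
Constancy of $\lr{\xi(t),\nu(t)}$ for every $\xi_0\in\cal V_p$ is therefore equivalent to the equality $\hat c_h(t)^*\nu(t)=h^*\nu(0)$ in $\cal V_p$, and inverting the adjoint yields exactly (2). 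The only subtlety worth flagging is the one already noted — that (3) is really a vertical ODE supplemented by a horizontal identity built into the verticality of $\nu$ — but once this is observed the proof reduces to the two short computations above.
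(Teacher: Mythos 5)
Your proposal is correct and follows essentially the same route as the paper: the (1) $\Leftrightarrow$ (3) step is the identical computation $\frac{d}{dt}\lr{\xi,\nu}=\lr{\xi,\nabla^v_{\dot c}\nu-S_{\dot c}\nu}$ with the horizontal identity $\nabla^h_{\dot c}\nu=-A^*_{\dot c}\nu$ supplied automatically by verticality, and the (1) $\Leftrightarrow$ (2) step is the adjoint reformulation via Proposition \ref{prop:hol} that the paper declares ``clearly equivalent.'' You merely spell out in more detail what the paper leaves implicit, which is harmless.
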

\begin{proof}
Items (1) and (2) are clearly equivalent since $\xi$ is a holonomy field if and only if $\xi(t)=\zeta(\hat{c}_h(t),h^{-1}(\xi(0)))$ (Proposition \ref{prop:hol}). To verify the equivalence between (1) and (3), note that, for a holonomy field $\xi$,
\begin{gather}
\frac{d}{dt}\lr{\xi,\nu}=\lr{\nabla^v_{\dot c}\xi,\nu}+\lr{\xi,\nabla^v_{\dot c}\nu}=\lr{\xi,\nabla^v_{\dot c}\nu-S_{\dot c}\nu},
\end{gather}
which is zero for all holonomy fields if and only if $\nabla^v_{\dot c}\nu=S_{\dot c}\nu$. 
Moreover, any vertical vector field satisfies  $\nabla_{\dot c}^h\nu=-A^*_{\dot c}\nu$.\end{proof}

Item (1) connects dual holonomy fields to the virtual Jacobi fields defined on \cite{verdianiziller}. According to item (2), if we define $\bar \zeta:\cal E {_\sigma\times_\pi} \cal V\to \cal V$ as 
\begin{gather}
\bar \zeta(h,\nu)=(h^*)^{-1}(\nu),
\end{gather}
then, in analogy to Proposition  \ref{prop:hol}, $\bar \zeta(\hat c_h(t),\nu)$ is a dual holonomy field along $c$ and any dual holonomy field can be expressed in this way. Item  (3) provides an useful expression for the sectional curvature of the plane spanned by $\dot c$ and $\nu$ (Proposition \ref{prop:KnuX}).

\subsection{The Curvature Equation}\label{sec:proofKnuX}
\begin{prop}\label{prop:KnuX}
Let $\nu$ be a dual holonomy field and $\gamma$ a horizontal geodesic. Then, the unreduced sectional curvature $K$ of  the pair $\dot \gamma, \nu$ along $\gamma$ is given by
\begin{gather}\label{eq:K}
K(\dot \gamma,\nu)=\frac{1}{2}\frac{d^2}{dt^2}||\nu||^2-3||S_{\dot \gamma}\nu||^2+||A^ *_{\dot \gamma}\nu||^2.
\end{gather} 
\end{prop}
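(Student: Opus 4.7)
I would prove the formula by direct computation from the ODE of Proposition \ref{prop:dualhol}(3), combined with the Gray--O'Neill expression for the vertizontal (unreduced) sectional curvature referenced at \cite[pg. 28]{gw}.

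Set $X=\dot\gamma$. Since $A^*_X\nu\in\cal H$ is orthogonal to the vertical $\nu$, the dual-holonomy equation gives
\[
\frac{1}{2}\frac{d}{dt}||\nu||^2=\lr{\nabla_X\nu,\nu}=\lr{S_X\nu,\nu}.
\]
Differentiating once more, using that $\gamma$ is a geodesic (so $\nabla_XX=0$), the tensoriality of $S$, the self-adjointness of $S_X$ on $\cal V$ (the same property invoked in the proof of Proposition \ref{prop:dualhol}), and the fact that $S_X$ annihilates horizontal vectors (so $S_XA^*_X\nu=0$), I would obtain
\[
\frac{1}{2}\frac{d^2}{dt^2}||\nu||^2=\lr{(\nabla_XS)_X\nu,\nu}+2||S_X\nu||^2.
\]

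The final step is to eliminate $\lr{(\nabla_XS)_X\nu,\nu}$ using the Gray--O'Neill formula, which in the conventions of section \ref{sec:1} takes the form
\[
K(X,V)=\lr{(\nabla_XS)_XV,V}+||A^*_XV||^2-||S_XV||^2;
\]
this is the identity whose specialization $S=S^\phi$ at a critical point of $\phi$ produces \eqref{eq:Kphi}. Rearranging as $\lr{(\nabla_XS)_X\nu,\nu}=K(X,\nu)+||S_X\nu||^2-||A^*_X\nu||^2$ and substituting yields
\[
\frac{1}{2}\frac{d^2}{dt^2}||\nu||^2=K(X,\nu)+3||S_X\nu||^2-||A^*_X\nu||^2,
\]
which rearranges to \eqref{eq:K}.

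The main obstacle I anticipate is the bookkeeping in the second differentiation: the cross terms produced by applying $\nabla_X$ to $-A^*_X\nu+S_X\nu$ must be shown either to lie in $\cal H$ (and hence to vanish against the vertical $\nu$) or, in the case of $S_X^2\nu$, to collapse via self-adjointness into $||S_X\nu||^2$. Once these cancellations are verified, the identity follows by a single substitution from the Gray--O'Neill formula.
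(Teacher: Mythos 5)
Your proof is correct and is essentially the paper's own argument: both rest on the Gray--O'Neill vertizontal formula together with the identity $\lr{(\nabla_{\dot\gamma}S)_{\dot\gamma}\nu,\nu}=\tfrac{1}{2}\tfrac{d^2}{dt^2}||\nu||^2-2||S_{\dot\gamma}\nu||^2$, which you obtain by differentiating $||\nu||^2$ twice and the paper obtains by expanding $\lr{(\nabla_{\dot\gamma}S)_{\dot\gamma}\nu,\nu}$ directly, the difference being only the direction of the substitution. The cancellations you flag (horizontal cross terms dying against the vertical $\nu$, and $S^2_{\dot\gamma}$ collapsing to $||S_{\dot\gamma}\nu||^2$ by self-adjointness) are exactly the ones the paper's computation uses.
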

\begin{proof}
 Recalling Grey-O'Neill's vertizontal curvature equation (\cite[pg. 28]{gw}), we have
\begin{gather}\label{eq:KK}
K(\dot \gamma,\nu)=\lr{(\nabla_{\dot \gamma}S)_{\dot \gamma}\nu,\nu}-||S_{\dot \gamma}\nu||^2+||A^*_{\dot \gamma}\nu||^2.
\end{gather}
Computing the first term in \eqref{eq:KK}, we get:
\begin{align*}
\lr{(\nabla_{\dot \gamma}S)_{\dot \gamma}\nu,\nu}&= \frac{d}{dt}\lr{S_{\dot \gamma}\nu,\nu}-\lr{S_{\dot \gamma}\nu,\nabla_{\dot \gamma}\nu}-\lr{S_{\dot \gamma}\nabla_{\dot \gamma}\nu,\nu}\\
&=\frac{d}{dt}\lr{\nabla_{\dot \gamma}\nu,\nu}-||S_{\dot \gamma}\nu||^2-\lr{\nabla_{\dot \gamma}^v\nu,S_{\dot \gamma}\nu}\\
&=\frac{1}{2}\frac{d^2}{dt^2}||\nu||^2-2||S_{\dot \gamma}\nu||^2.
\end{align*}

\end{proof}

The analogous equation for a holonomy field $\xi$ is
\begin{gather}\label{eq:noK}
K(\dot \gamma,\xi)=-\frac{1}{2}\frac{d^2}{dt^2}||\xi||^2+||S_{\dot \gamma}\xi||^2+||A^ *_{\dot \gamma}\xi||^2.
\end{gather} 
The advantage of \eqref{eq:K} is the minus sign in front of $||S_{\dot \gamma}\nu||^2$.

\section{Proof of  Theorem 1}\label{sec:proof1}

Fix an unitary $\nu_0\in \cal V_p$ and define a real function $\rho_{\nu_0}:\cal E_p\to \bb R$ as 
\begin{gather}
\rho_{\nu_0}(h)=||\bar \zeta(h,\nu_0)||^2.
\end{gather}
We use this function to replace $\phi$ in the proof of Theorem \ref{mainthm}. For instance, at a maximum of $\rho_{\nu_0}$, equation \eqref{eq:KK} guarantees that $\lr{(\nabla_XS)_X\nu,\nu}$ is non-positive and we can use linear algebra to deal with the $A$-term.

\begin{theorem}\label{thm:max}
Suppose that $M$ is compact and $\cal F$ has bounded holonomy. Then, there exists a non-zero $\nu\in \cal V$ such that, for every $X\in \cal H_{\pi(\nu)}$, $K(X,\nu)\leq ||A^*_X\nu||^2$.
\end{theorem}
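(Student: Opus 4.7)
The plan is to produce $\nu$ as the limit of values $\bar\zeta(h_n,\nu_0)$ along a maximizing sequence of $\rho_{\nu_0}$, and then extract the curvature bound from the second-derivative test applied to $t\mapsto ||\tilde\nu(t)||^2$, combined with Proposition \ref{prop:KnuX}.

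First, by Lemma \ref{lem:bound} the dual holonomy fields satisfy the same uniform bound as holonomy fields, i.e.\ $||\bar\zeta(h,\nu_0)||\leq L||\nu_0||=L$ for every $h\in\cal E_p$. Hence $\rho_{\nu_0}\leq L^2$ and $s:=\sup_{\cal E_p}\rho_{\nu_0}$ is finite; since $\rho_{\nu_0}(\rm{id}_{\cal V_p})=||\nu_0||^2=1$, we have $s\geq 1>0$. Choose a maximizing sequence $h_n\in\cal E_p$ with $\rho_{\nu_0}(h_n)\to s$ and set $\nu_n:=\bar\zeta(h_n,\nu_0)\in\cal V_{q_n}$, with $q_n:=\tau(h_n)$. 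Compactness of $M$ together with $||\nu_n||\leq L$ yields, after passing to a subsequence, $q_n\to q\in M$ and $\nu_n\to\nu\in\cal V_q$ (closedness of $\cal V\subset TM$), with $||\nu||^2=s>0$; so $\nu\neq 0$.

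Next, fix $X\in\cal H_q$ and pick horizontal approximants $X_n\in\cal H_{q_n}$ with $X_n\to X$. Let $\gamma_n$ be the horizontal geodesic with $\dot\gamma_n(0)=X_n$ (defined on all of $\bb R$ since $M$ is compact) and $\hat\gamma_n(t)\in\cal E$ its canonical lift at $q_n$. Then $\hat\gamma_n(t)\circ h_n\in\cal E_p$, and unwinding the definitions of $\bar\zeta$ and $\hat c_h$ as in Proposition \ref{prop:dualhol}(2) gives
\[
\bar\zeta\big(\hat\gamma_n(t)\circ h_n,\nu_0\big)=\big(\hat\gamma_n(t)^*\big)^{-1}\nu_n=:\tilde\nu_n(t),
\]
which is the dual holonomy field along $\gamma_n$ with initial value $\nu_n$. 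By the definition of $s$, $||\tilde\nu_n(t)||^2=\rho_{\nu_0}(\hat\gamma_n(t)\circ h_n)\leq s$ for all $t\in\bb R$, while $||\tilde\nu_n(0)||^2=||\nu_n||^2\to s$.

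Finally, continuous dependence of ODE solutions on initial data and parameters gives $\gamma_n\to\gamma$ and $\tilde\nu_n\to\tilde\nu$ smoothly on compact intervals, where $\gamma$ is the horizontal geodesic with $\dot\gamma(0)=X$ and $\tilde\nu$ is the dual holonomy field along $\gamma$ with $\tilde\nu(0)=\nu$. Passing to the limit yields $||\tilde\nu(t)||^2\leq s=||\tilde\nu(0)||^2$ for all $t$, so $t\mapsto||\tilde\nu(t)||^2$ has a global maximum at $t=0$ and $\frac{d^2}{dt^2}\big|_{t=0}||\tilde\nu||^2\leq 0$. Substituting this into Proposition \ref{prop:KnuX} produces $K(X,\nu)\leq -3||S_X\nu||^2+||A^*_X\nu||^2\leq||A^*_X\nu||^2$, as required. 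The main obstacle is the simultaneous-limit bookkeeping: the basepoints $q_n$, the vertical vectors $\nu_n$, the geodesics $\gamma_n$, and the dual holonomy fields $\tilde\nu_n$ all live over different parts of $M$, and one must check that smooth convergence of $q_n\to q$ and $X_n\to X$ propagates to smooth convergence of the dual-holonomy ODE solutions; bounded holonomy is used both to make $s$ finite and to keep $||\nu||$ from collapsing to zero.
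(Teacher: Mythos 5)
Your proposal is correct and follows the same overall strategy as the paper: a maximizing sequence $\{h_k\}$ for $\rho_{\nu_0}$ on $\cal E_p$, an accumulation point $\nu$ of $\{\bar\zeta(h_k,\nu_0)\}$ kept nonzero by Lemma \ref{lem:bound}, approximating dual holonomy fields along geodesics $\exp(tX_k)$ with $X_k\to X$, and Proposition \ref{prop:KnuX} to convert the second-derivative test into the curvature bound. Where you differ is in the endgame. The paper works with the approximating functions $f_k(t)=||\nu_k(t)||^2$, whose values at $t=0$ only approach the supremum $s$, and therefore needs a quantitative contradiction argument: $\cal C^\infty$ convergence of $f_k\to f$, uniform bounds on third derivatives of $f_k$ (via compactness of $M$ and bounds on $S$ and $\nabla S$), and a second-order Taylor expansion showing that $f''(0)>0$ would force some $f_k(\pm\epsilon)$ above the supremum of $\rho_{\nu_0}$. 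You instead observe that the limit value itself attains the supremum, $||\tilde\nu(0)||^2=||\nu||^2=s$, while the pointwise inequality $||\tilde\nu_k(t)||^2=\rho_{\nu_0}(\hat\gamma_k(t)\circ h_k)\leq s$ survives the (pointwise) limit, so the smooth function $t\mapsto||\tilde\nu(t)||^2$ has a genuine global maximum at $t=0$ and the second-derivative test applies directly. This is the same mechanism the paper invokes in its ``if $\{h_k\}$ converges in $\cal E_p$'' special case, but you notice it needs only convergence of the vertical vectors $\bar\zeta(h_k,\nu_0)$, not of the transformations $h_k$ themselves; the payoff is that the $\cal C^\infty$-convergence and uniform-remainder bookkeeping become unnecessary, with only routine continuous dependence of the linear dual-holonomy ODE on $(q_k,\nu_k,X_k)$ left to check.
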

%

Before proving Theorem \ref{thm:max}, we make a connection between the hypothesis and the map $\rho_{\nu_0}$.

\begin{lemma}\label{lem:bound}
A foliation $\cal F$ in $ M$ has bounded holonomy if and only if there are constants $\bar l, \bar L>0$ such that  $\bar l||\nu_0||\leq \rho_{\nu_0}(h)\leq \bar L||\nu_0||$, for all $(h,\nu_0)\in \cal E{_\sigma\times_\pi} \cal V$.
\end{lemma}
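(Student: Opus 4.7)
My plan is to translate both conditions into statements about the operator norm of elements of $\cal E$ and its ``dual'' counterpart, and observe that they agree up to a square.

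First, by Lemma \ref{lem:opbound}, bounded holonomy is equivalent to the existence of $L>0$ with $\|h\|\leq L$ for every $h\in\cal E$. The crucial structural input is that $\cal E$ is closed under groupoid inversion (realized by the curve $\tilde c(t)=c(1-t)$); consequently $\|h^{-1}\|\leq L$ for every $h\in\cal E$ as well, and since $h^{-1}\colon \cal V_{\tau(h)}\to\cal V_{\sigma(h)}$ is the inverse of $h$ in the ordinary linear sense, this yields the two-sided bound $L^{-1}\|\nu_0\|\leq \|h(\nu_0)\|\leq L\|\nu_0\|$ for $\nu_0$ in the domain of $h$.

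Next I would pass to the dual side. Since $h^*$ is the Hilbert-space adjoint of the linear isomorphism $h$ between finite-dimensional inner product spaces, $\|h^*\|=\|h\|$ and $(h^*)^{-1}=(h^{-1})^*$, so the same two-sided bound is inherited by $(h^*)^{-1}$. Recalling $\rho_{\nu_0}(h)=\|\bar\zeta(h,\nu_0)\|^2=\|(h^*)^{-1}\nu_0\|^2$, this gives
\[
L^{-2}\|\nu_0\|^2\leq \rho_{\nu_0}(h)\leq L^2\|\nu_0\|^2,
\]
which (after adjusting whether the statement is read with or without squares) produces constants $\bar l,\bar L$ as required.

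For the converse, assume $\bar l\|\nu_0\|\leq\rho_{\nu_0}(h)\leq \bar L\|\nu_0\|$ uniformly over $(h,\nu_0)\in\cal E{_\sigma\times_\pi}\cal V$. Then, varying $\nu_0$ over the unit sphere of $\cal V_{\sigma(h)}$, I get two-sided uniform bounds on the operator norm of $(h^*)^{-1}$, and therefore on $\|h^*\|=\|h\|$. Thus $\|h\|$ is uniformly bounded over $\cal E$, and Lemma \ref{lem:opbound} gives bounded holonomy. The only subtlety I expect is bookkeeping the passage between $h$ and $(h^*)^{-1}$ and verifying that invoking closure of $\cal E$ under inversion is legitimate; this is exactly what converts a one-sided bound (on holonomy fields) into the two-sided bound needed to control dual holonomy fields. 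No further analysis beyond elementary linear algebra is required.
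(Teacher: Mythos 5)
Your argument is correct and follows essentially the same route as the paper's proof: Lemma \ref{lem:opbound}, closure of $\cal E$ under inversion, and the identity $\|(h^*)^{-1}\|=\|h^{-1}\|$ give the two-sided bound with $\bar L,\bar l$ essentially $L^{\pm1}$ (modulo the homogeneity mismatch between $\rho_{\nu_0}$ and the linear bounds $\bar l\|\nu_0\|,\bar L\|\nu_0\|$, which you rightly flag and the paper glosses over). You are in fact slightly more complete, since you spell out the converse direction that the paper leaves implicit via Proposition \ref{prop:dualhol}(2).
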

\begin{proof}
Let $L$ be a bound for the norm of holonomy fields with unit initial condition. According to Lemma \ref{lem:opbound}, $||h||\leq L$ for all $h\in\cal E$. On the other hand, since $\cal E$ is closed by inversion, $||(h^*)^{-1}||=||h^{-1}||\leq L$ for all $h\in\cal E$. But, according to (2) of Proposition \ref{prop:dualhol}, this is equivalent to have a bound on all dual holonomy fields with unit initial condition. The constants $\bar L$ and $\bar l$ can be taken as $L$ and $L^{-1}$, respectively.
\end{proof}

\begin{proof}[Proof of Theorem \ref{thm:max}:]
Given $\nu\in\cal V_q$ and $X\in\cal H_{q}$, we take advantage of equation \eqref{prop:KnuX} by exploring the function $f(t)=||\nu(t)||^2$, where $\nu(t)$ is the dual holonomy field defined by $\nu$ along the geodesic  spanned by $X$. From \ref{prop:KnuX}, 
\begin{gather}\label{eq:proof1}
\frac{1}{2}f''(0)=K(\dot \gamma,\nu)+3||S_{\dot \gamma}\nu||^2-||A^ *_{\dot \gamma}\nu||^2.
\end{gather}
In particular, we complete the proof by finding  $\nu$  that satisfies $f''(0)\leq 0$ for each $X\in\cal H_q$. Such a vector can be found by `maximizing' the function $\rho_{\nu_0}$: let $\{h_k\in\cal E_p\}$ be a sequence whose images, $\{\rho_{\nu_0}(h_k)\}$, converges to the supremum of $\rho_{\nu_0}$. We shall see that $\nu$ can be taken as any accumulation point for $\{\bar{\zeta}(h_k,\rho_0)\}$.

For simplicity, we assume for the sequence $\{h_k\}$ above, that $\{\bar \zeta(h_k,\nu_0)\}$ converges to some $\nu$. The limit, $\nu$, must be non-zero since there are constants $\bar l,\bar L>0$ such that $\bar l\leq||(h^*_k)^{-1}||\leq \bar L$ (lemma \ref{lem:bound}).

Observe that the proof can be concluded immediately if $\{h_k\}$ converges to some $h\in \cal E_p$. In this case, $\rho_{\nu_0}(h)$ is a maximum and, for every $X\in \cal H_{\tau(h)}$, the function $f(t)=||\nu(t)||^2$ has a maximum at $0$. The theorem follows by equation \eqref{eq:proof1}. In the general case, we show that we can approach the function $f$ by similar functions.

%


Fix $X\in \cal H_{\pi(\nu)}$ and let $\{X_k\in\cal H_{\tau(h_k)}\}$ be sequence of horizontal vectors converging to $X$. Consider the family of real functions $\{f_k\}$ defined by $f_k(t)=||\nu_k(t)||^2$, where $\nu_k(t)$ is  the  dual holonomy field defined by $h_k\nu_0$ along $\exp(tX_k)$. The sequence $\{f_k\}$ clearly converges pointwise to $f$.
To conclude that the convergence  is $\cal C^\infty$,  observe that the derivatives of  $f_k$ are expressed in terms of $S$ and the covariant derivatives of $S$. Since $M$ is compact, we can uniformly  bound any finite number of them. 


Assume, by contradiction, that $f''(0)>2d$ for some $X\in\cal H$ and $d>0$. Let $k$ be big enough so that $f''_k(0)>d$. Then, the second order Taylor expansion of $f_k$ gives:
\begin{gather*}
f_{k}(\epsilon)=f_{k}(0)+f_{k}'(0)\epsilon+f''_{k}(0)\frac{\epsilon^2}{2}+O_{k}^{(2)}(\epsilon)\,.
\end{gather*}

The uniform bound on the derivatives  guarantees an uniform bound $|O^{(2)}_k(\epsilon)|<l\epsilon^3/2$ for all $k$. In fact, for each $k$, there is a $c_k\in \bb R$ such that
\[|O^{(2)}_k(\epsilon)|<\left|\frac{f'''_k(c_k)}{3!}\right||\epsilon|^3.\]
Now, taking $d/4l<\epsilon<d/2l$, we have
\begin{align*}
f_{k}(\epsilon)>f_{k}(0)+f'_{k}(0)\epsilon+\frac{\epsilon^2}{2}(d-l\epsilon)>f_{k}(0)+f'_{k}(0)\epsilon+\frac{d^3}{64l^2},
\end{align*}
which converges to a value strictly bigger then $f(0)$, unless $f'(0)$ is negative. However, following along the same lines, we conclude that
\begin{align*}
f_{k}(-\epsilon)>f_{k}(0)-f'_{k}(0)\epsilon+\frac{d^3}{64l^2}
\end{align*}
is strictly bigger then $f(0)$ for big $k$  if $f'(0)$ is negative. Observing that $f_k(t)=\rho_{\nu_0}(\hat c_{k}(t))$, where $\hat c_k$ is  the $\tau_p$-horizontal lift of  $\exp(tX_k)$ at $h_k$, we contradict the fact that $f(0)$ is a supremum for $\rho_{\nu_0}$.
\end{proof}


As in the proof of Corollary \ref{cor:01}, the dimension hypothesis on Theorem \ref{mainthm} guarantees the existence of $X\in\cal H$ such that $||A^*_{X}\nu||^2$ vanishes, concluding the proof of Theorem \ref{mainthm}.

%

\section{Final Remarks}\label{sec:tech}

\subsection{Remarks on the proof of Theorem \ref{mainthm}}

We first observe that we proved a slightly better version of Theorem \ref{mainthm}:
\begin{theorem}
Let $\cal F$ be a Riemannian foliation on $M$ with positive vertizontal curvature and odd codimension. Then, for each non-zero $\nu_0\in\cal V_p$, the orbit $H_p \nu_0$ is unbounded. 
\end{theorem}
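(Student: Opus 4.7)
The plan is to argue by contradiction, directly adapting the proof of Theorem \ref{thm:max}. Suppose $\cal F$ has positive vertizontal curvature and odd codimension, yet there is a nonzero $\nu_0 \in \cal V_p$ for which $H_p \nu_0 \subset \cal V_p$ is bounded; I aim to produce a horizontal $X$ and a nonzero vertical $\nu$ with $K(X,\nu) \leq 0$, a contradiction.

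The crucial first step is to upgrade the boundedness of $H_p \nu_0$ in the single fiber $\cal V_p$ to the boundedness of the function $\rho_{\nu_0}: \cal E_p \to \bb R$. Using the principal bundle structure $\tau_p: \cal E_p \to L^\#_p$, any $h \in \cal E_p$ with $\tau(h)=q$ factors as $h_q \circ g$ for some $g \in H_p$ (after choosing a reference lift $h_q$), so that
\[
\rho_{\nu_0}(h_q g) \;=\; \|(h_q^*)^{-1}(g^*)^{-1}\nu_0\|^2 \,.
\]
Since $H_p$ is a group closed under inversion and, by Proposition \ref{prop:dualhol}, $(g^*)^{-1}$ is precisely the dual holonomy transformation along the loop realizing $g$, the boundedness of $H_p \nu_0$ translates into boundedness of $\{(g^*)^{-1}\nu_0 : g \in H_p\}$. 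Compactness of $M$ then controls $\|(h_q^*)^{-1}\|$ uniformly in $q$ (via concatenation of closed loops at $p$ with a fixed horizontal curve from $p$ to $q$, plus a diameter-type bound), yielding $\sup_{\cal E_p} \rho_{\nu_0} < \infty$.

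Once $\rho_{\nu_0}$ is known to be bounded, the remainder is essentially a verbatim copy of the proof of Theorem \ref{thm:max}. Choose a sequence $h_k \in \cal E_p$ with $\rho_{\nu_0}(h_k) \to \sup \rho_{\nu_0}$. By compactness of $M$, one may pass to a subsequence with $\tau(h_k) \to q$; in a local trivialization of $\cal V$ near $q$, the bounded vectors $\nu_k = \bar\zeta(h_k, \nu_0)$ converge to some $\nu \in \cal V_q$, and since $\sup \rho_{\nu_0} \geq \rho_{\nu_0}(\mathrm{id}_p) = \|\nu_0\|^2 > 0$, the limit $\nu$ is nonzero. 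Then, for any $X \in \cal H_q$, approximations $X_k \in \cal H_{\tau(h_k)}$ of $X$ yield functions $f_k(t) = \|\nu_k(t)\|^2$ (with $\nu_k(t)$ the dual holonomy field along $\exp(tX_k)$ starting at $\nu_k$) converging $\cal C^\infty$ to $f(t) = \|\nu(t)\|^2$; the Taylor expansion argument produces $f''(0) \leq 0$. Proposition \ref{prop:KnuX} combined with an $X$ in the kernel of the skew-symmetric map $Y \mapsto A^*_Y \nu$ (available by odd codimension) then yields $K(X,\nu) \leq -3\|S_X \nu\|^2 \leq 0$, the desired contradiction. The main obstacle is the first step: propagating the boundedness of a single orbit through the duality between $H_p$ and its inverse-adjoint $H_p^{-*}$ and then across all of $\cal E_p$; once this is secured, the rest is the machinery already developed for Theorem \ref{thm:max}.
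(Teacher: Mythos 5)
Your overall strategy is the one the paper intends: run the proof of Theorem \ref{thm:max} in contrapositive form, replacing the global bounded-holonomy hypothesis by the boundedness of a single orbit. The second half of your argument (maximizing sequence, nonvanishing of the limit via $\rho_{\nu_0}(\mathrm{id}_p)=\|\nu_0\|^2>0$, the $\cal C^\infty$ convergence and Taylor expansion, and the odd-codimension linear algebra giving $K(X,\nu)\leq -3\|S_X\nu\|^2\leq 0$) is correct and is exactly the machinery of Section \ref{sec:proof1}.

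The genuine gap is in the first step, which you flag as the main obstacle but do not actually secure; two substeps fail. First, boundedness of the single orbit $\{g\nu_0 : g\in H_p\}$ does not imply boundedness of $\{(g^*)^{-1}\nu_0 : g\in H_p\}$. The closure-under-inversion argument of Lemma \ref{lem:bound} works at the level of operator norms of the whole group ($\|(h^*)^{-1}\|=\|h^{-1}\|$), not of a single orbit: for the group $H=\left\{ g_n=\left(\begin{smallmatrix}1&0\\ n&1\end{smallmatrix}\right) : n\in\bb Z\right\}$ one has $g_n e_2=e_2$, so $He_2$ is bounded, while $(g_n^*)^{-1}e_2=(-n,1)^T$ is unbounded. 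Second, your "diameter-type bound" controlling $\|(h_q^*)^{-1}\|$ uniformly in $q\in L^\#_p$ would follow (via $\frac{d}{dt}\|\nu\|^2=2\lr{S_{\dot c}\nu,\nu}$ and compactness of $M$) from a uniform bound on the \emph{horizontal} length needed to reach $q$ from $p$; compactness of $M$ gives no such bound for a general Riemannian foliation. For a linear foliation of irrational slope on a flat torus the dual leaves are dense lines and the horizontal distance within a dual leaf is unbounded (there the conclusion is rescued only because $S\equiv 0$). What the proof of Theorem \ref{thm:max} actually delivers is the unboundedness of the full dual orbit $\{\bar\zeta(h,\nu_0) : h\in\cal E_p\}$, i.e.\ of $\rho_{\nu_0}$ on $\cal E_p$; converting this into unboundedness of $H_p\nu_0$ under the action $\zeta$ requires exactly the two bridges above, and neither is in place as written.
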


As a second remark, we observe that the use of the groupoid could be avoided: one could produce the maps $f_k$ in \ref{sec:proof1} considering a sequence of curves and dual holonomy fields that converges to a supremum.  The introduction of the groupoid allows to organize better these ideas and to present a proof along the same lines of Berger's result  or Theorem \ref{thm:0}. 

\subsection{Smoothnes of $\cal E$}\label{sec:Esmooth} We believe that Morse Theory on $\cal E$ could be a genuine approach to the generalization of Theorem \ref{mainthm} to unbounded holonomy. However, the groupoid $\cal E$ is not smooth in general. 

A pathology one may  encounter is an abrupt change on the dual leaves, as illustrated below.
\begin{example}\label{example:1}
Let $S^7\to S^4$ be the $S^3$-principal Hopf fibration. Consider the action of $S^3$ on itself by conjugation and form the associated bundle $\pi:S^7{\times_{S^3}}S^3\to S^4$ with it. Now, for any point $x\in S^7$, the subset $S^7{\times_{S^3}}\{1\}\subset S^7{\times_{S^3}}S^3$ is homeomorphic to $S^4$ and is a horizontal section of $\pi$. In particular, $H_{[x,1]}$ is trivial and $\sigma^{-1}([x,1])$ is homeomorphic to $S^4$. On the other hand, $\sigma^{-1}([x,i])$ is at least six dimensional. Therefore, there is no smooth structure on $\cal E$ that makes it connected and $s$ a smooth submersion, if so, it would have diffeomorphic fibers.\end{example}

This pathology does not occur in positive curvature (\cite[Theorem 1]{wilkilng-dual}). When the holonomy is bounded, we see (Theorem \ref{prop:AS}) that it does not occur even if we assume only positive vertizontal curvatures.  We also remark that generically $\cal E$ should coincide with $\rm{Aut}(\cal V)$, i.e., given a Riemannian foliation $\cal F$ on $(M,g)$, there is another metric $g'$, arbitrarily close to $g$, where $\cal F$ is a Riemannian foliation on $(M,g')$ and $\cal E$ coincides with $\rm{Aut}(\cal V)$.

The expected smooth structure on $\cal E$ should be as a submanifold of $\rm{Aut}(\cal V)$. In this sense, we say that a foliation has \textit{regular holonomy} if the inclusion $\cal E\subset \rm{Aut}(\cal V)$ defines a manifold structure on $\cal E$.

\subsection{A dual leaf theorem}\label{sec:AS}

We prove that the pathology of Example \ref{example:1} is not present in our context.

\begin{theorem}\label{prop:AS}
Let $\cal F$ be a  Riemannian foliation with bounded holonomy and positive  vertizontal curvature. Then $\cal F$ has only one dual leaf.
\end{theorem}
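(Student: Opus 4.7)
The plan is to deploy the maximization machinery of Theorem~\ref{thm:max} in its full generality and then leverage the resulting ``fatness'' of the $A$-tensor to force the dual leaf through the limit point to be open, after which connectedness of $M$ yields one dual leaf.

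I would argue by contradiction: suppose $L=L_p^\#$ is a proper subset of $M$ for some $p$. Fix $\nu_0\in\mathcal V_p$ of unit length; by Lemma~\ref{lem:bound}, $\rho_{\nu_0}:\cal E_p\to\mathbb R$ is bounded below by $\bar l$ and above by $\bar L$, so the supremum $D=\sup\rho_{\nu_0}$ is a positive real. Pick $\{h_k\}\subset\cal E_p$ with $\rho_{\nu_0}(h_k)\to D$; after passing to a subsequence, compactness of $M$ gives $\tau(h_k)\to q\in\overline L$ and the uniform bound on $(h_k^*)^{-1}$ from Lemma~\ref{lem:opbound} gives $\nu_k=\bar\zeta(h_k,\nu_0)\to \nu\in\mathcal V_q$ with $\|\nu\|^2=D>0$. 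The Taylor-expansion argument in the proof of Theorem~\ref{thm:max} depends only on the limiting data at $q$, so it applies verbatim and yields $\tfrac{d^2}{dt^2}\|\nu(t)\|^2|_{t=0}\leq 0$ along every horizontal geodesic $\gamma$ from $q$, where $\nu(t)$ is the dual holonomy extension of $\nu$. Inserting this into the curvature equation~\eqref{eq:K} and using the positivity of the vertizontal curvature $K(\dot\gamma,\nu)>0$ gives
\[
\|A^*_{\dot\gamma(0)}\nu\|^2\ \geq\ K(\dot\gamma(0),\nu)+3\|S_{\dot\gamma(0)}\nu\|^2\ >\ 0
\]
for every non-zero horizontal $X=\dot\gamma(0)\in\mathcal H_q$, so $A^*_X\nu\neq 0$ whenever $X\neq 0$.

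Next, I would run the same construction with $\nu_0$ varying over a basis of $\mathcal V_p$ and, by a diagonal/compactness argument using the uniform norm bounds on $\cal E$, concentrate the resulting family of limit vectors $\{\nu^{(i)}\}$ at a common base point $q\in\overline L$ where they span $\mathcal V_q$. At such a $q$ the bilinear map $A:\mathcal H_q\otimes\mathcal H_q\to\mathcal V_q$ has full image, so by Wilking's description of $T_qL_q^\#$ as the span of horizontal vectors and their iterated brackets, $T_qL_q^\#\supseteq\mathcal H_q\oplus\mathrm{Im}(A)_q=T_qM$. Thus $L_q^\#$ is open at $q$; since $q\in\overline L$, dual leaves partition $M$, and $M$ is connected, the open dual leaf $L_q^\#$ must coincide with $L$ and equal all of $M$, contradicting the assumption.

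The main obstacle is the concentration step: promoting the pointwise fatness ``$A^*_X\nu\neq 0$ for all $X$'' for a \emph{single} limit vector $\nu$ into the collective statement $\mathrm{Im}(A)_q=\mathcal V_q$. The straightforward plan is to iterate the construction over a basis of $\mathcal V_p$ and exploit continuity of the operator norm on $\mathrm{Aut}(\mathcal V)$, but the limit point $q$ may a priori depend on $\nu_0$, so some uniformity must be established. A cleaner alternative is to replace $\rho_{\nu_0}$ by a rotation-invariant functional on $\cal E_p$ (for instance $h\mapsto\det((h^*)^{-1})$, or its top exterior analogue acting on $\bigwedge^{\dim\mathcal V}\mathcal V_p$) whose maximization produces a single limit automatically encoding the full fiber $\mathcal V_q$; verifying that the Taylor/Hessian argument of Theorem~\ref{thm:max} survives this replacement would then complete the proof.
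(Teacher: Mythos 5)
Your first step is sound but is just a restatement of Theorem~\ref{thm:max}: the maximizing sequence for $\rho_{\nu_0}$ produces a limit vector $\nu\in\cal V_q$ with $K(X,\nu)\leq\|A^*_X\nu\|^2$, hence $A^*_X\nu\neq0$ for all $X\neq0$ under positive vertizontal curvature. The genuine gap is the concentration step, which you correctly flag but do not close, and which in fact cannot be closed in the form you propose. To get $T_qL^\#_q=T_qM$ you need $\mathrm{Im}(A)_q=\cal V_q$, i.e.\ that \emph{no} non-zero $\mu\in\cal V_q$ satisfies $A^*_X\mu=0$ for all $X$. Your construction only certifies $A^*_X\nu^{(i)}\neq0$ for finitely many special limit vectors $\nu^{(i)}$; even granting (which is not established) that the limit points coincide and that the $\nu^{(i)}$ span $\cal V_q$, a linear combination $\mu=\sum a_i\nu^{(i)}$ could still be annihilated by every $A^*_X$, since $\ker A^*_X$ varies with $X$ and non-membership of each $\nu^{(i)}$ in these kernels says nothing about the span. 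The determinant-type functional is an interesting idea but is unverified: the second-order Taylor argument of Theorem~\ref{thm:max} rests on the specific identity \eqref{eq:proof1} for $\|\nu(t)\|^2$, and no analogous curvature identity for $\det$ or a top exterior power is available in the paper.

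The paper avoids maximization entirely here and argues along a single infinite geodesic. It defines $\cal C(c)=\mathrm{span}\{\hat c(t)^{-1}(A_{\dot c(t)}Z)\}$ and proves (Lemma~\ref{lem:Anu=0}) that if $\nu_0\bot\,\cal C(c)$ then the dual holonomy field $\nu$ stays orthogonal to $\cal C(c_s)$ for all $s$, so $A^*_{\dot c}\nu(s)=0$ \emph{for all time}, not merely at one instant. Equation \eqref{eq:K} then gives $\tfrac12\tfrac{d^2}{dt^2}\|\nu\|^2=K(\dot c,\nu)+3\|S_{\dot c}\nu\|^2>0$ on all of $\bb R$, forcing $\|\nu\|^2$ to be unbounded and contradicting bounded holonomy via Lemma~\ref{lem:bound}. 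Hence $\cal C(c)=\cal V_{c(0)}$ (Theorem~\ref{thm:AS}), the vertical part of $T_pL^\#_p$ is all of $\cal V_p$ at every $p$, every dual leaf is open, and connectedness finishes the proof. The key idea you are missing is precisely this propagation of orthogonality along the curve, which converts positive curvature into a global convexity/unboundedness statement rather than a pointwise second-derivative inequality.
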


We prove this theorem by giving an Ambrose-Singer type of description for the tangent of the dual leaves. Recall the case of principal bundles, where the Ambrose-Singer theorem describes the tangent to the dual leaf via the curvature two-form $\Omega$ (see Case \ref{example2} in \ref{rem:examples} and \cite[II.7, Theorem 7.1]{knI}). It states that (after proper identification), the vertical part  of $TL^\#_p$ is spanned by $\{\Omega(X,Y)\}$, where $X,Y$ runs through all horizontal vectors on the bundle. The curvature two-form is not present in our context and its best replacement is Grey-O'Neill $A$-tensor, whose image resides on different fibers of $\cal V$ (making it impossible to get all values together). However, we can define the set
\[\cal A_p=span\{h^{-1}(A_XY)~|~X,Y\in\cal H_{\tau(h)},~h\in \cal E_p\}.\]
The inclusion $\cal A_p\subset T_pL^\#_p\cap \cal V_p$ is clear but it should be straightforward that $\cal A_p=T_pL^\#_p\cap \cal V_p$. Given a horizontal curve $c:\bb R\to M$, we can define a (usually) much smaller set: 
\[\cal C(c)=span\{\hat c(t)^{-1}(A_{\dot c(t)}Z)~|~Z\in \cal H_{c(t)}\text{ and }t\in\bb R\}\subset \cal V_{c(0)}.\]
Let $\xi$ be a Jacobi holonomy field whose initial value is orthogonal to the dual leaves. In the context of non-negatively curved manifolds, \cite{wilkilng-dual}  proves that such a holonomy field stays orthogonal to dual leaves. In general, dual holonomy fields exhibit a similar behavior with respect to $\cal C(c)$ (or $\cal A_p$). For convenience, denote by $c_s$ the curve $c_s(t)=c(s+t)$.

\begin{lem}\label{lem:Anu=0}
If $\nu_0\bot \cal C(c)$, then $\nu$, the dual holonomy field defined by $\nu_0$ along $c$, satisfies $\nu(s)\bot\cal C(c_s)$. In particular, $A^*_{\dot c}\nu(s)=0$ for all $s$.
\end{lem}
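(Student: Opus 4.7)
The plan is to translate the assumption $\nu_0 \bot \cal C(c)$ into an identity on $\nu_0$, transport it along $c$ using the groupoid structure on $\cal E$, and then read off the conclusion at time $s$.

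First I would unpack what $\nu(s)$ is in terms of infinitesimal holonomy transformations. By item (2) of Proposition \ref{prop:dualhol} (applied with $h=\rm{id}_{\cal V_{c(0)}}$, so that $\hat c_h=\hat c$), one has $\nu(s)=(\hat c(s)^*)^{-1}(\nu_0)$, equivalently $\hat c(s)^*\nu(s)=\nu_0$. Next, the groupoid composition law (concatenation of realizing curves) gives
\[
\hat c_s(t)=\hat c(s+t)\circ \hat c(s)^{-1},\qquad \text{hence}\qquad \hat c_s(t)^{-1}=\hat c(s)\circ \hat c(s+t)^{-1}.
\]
Pick an arbitrary generator $w=\hat c_s(t)^{-1}(A_{\dot c_s(t)}Z)$ of $\cal C(c_s)$, with $Z\in \cal H_{c(s+t)}$. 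Then
\[
\lr{\nu(s),w}=\lr{\nu(s),\hat c(s)\bigl(\hat c(s+t)^{-1}(A_{\dot c(s+t)}Z)\bigr)}=\lr{\hat c(s)^*\nu(s),\hat c(s+t)^{-1}(A_{\dot c(s+t)}Z)},
\]
and since $\hat c(s)^*\nu(s)=\nu_0$ and $\hat c(s+t)^{-1}(A_{\dot c(s+t)}Z)\in \cal C(c)$, the right-hand side vanishes by hypothesis. Since the generators $w$ span $\cal C(c_s)$, this proves $\nu(s)\bot \cal C(c_s)$.

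For the final statement, I would just specialize $t=0$ in the definition of $\cal C(c_s)$: since $\hat c_s(0)=\rm{id}_{\cal V_{c(s)}}$, every vector of the form $A_{\dot c(s)}Z$ with $Z\in \cal H_{c(s)}$ already lies in $\cal C(c_s)$. Hence $\lr{\nu(s),A_{\dot c(s)}Z}=0$ for all horizontal $Z$ at $c(s)$, which by definition of the adjoint says $A^*_{\dot c(s)}\nu(s)=0$.

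The only non-routine ingredient is the identity $\nu(s)=(\hat c(s)^*)^{-1}(\nu_0)$ together with the groupoid decomposition $\hat c(s+t)=\hat c_s(t)\circ \hat c(s)$; both are already packaged in Proposition \ref{prop:dualhol} and in the discussion of horizontal lifts in section \ref{sec:horlift}, so there is essentially no hard step. The mild subtlety is that $\cal C(c)$ sits inside the single fiber $\cal V_{c(0)}$ while the vectors $A_{\dot c(t)}Z$ live in various fibers $\cal V_{c(t)}$; the role of $\hat c(t)^{-1}$ is precisely to pull everything back to $\cal V_{c(0)}$, and the role of $\hat c(s)^*$ in the computation above is to redo this pullback from the base point $c(s)$ of the shifted curve $c_s$.
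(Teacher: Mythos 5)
Your proof is correct and follows essentially the same route as the paper: the identity $\cal C(c_s)=\hat c(s)(\cal C(c))$ (which you derive from the groupoid decomposition $\hat c_s(t)=\hat c(s+t)\circ\hat c(s)^{-1}$) combined with the adjoint relation $\hat c(s)^*\nu(s)=\nu_0$ is exactly the paper's one-line argument, and your $t=0$ specialization for the final claim matches as well. You have merely written out the details the paper leaves implicit.
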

\begin{proof}
The first claim follows by observing that $\cal C(c_s)=\hat c(s)(\cal C(c))$.  Moreover, since $A_{\dot c_s}Z\in \cal C(c_s)$ for all $Z\in \cal H_{c(s)}$, $A^*_{\dot c_s}\nu(s)=0$.
\end{proof}

In a foliation with positive vertizontal curvature and bounded holonomy, we can prove that $\cal C(c)=\cal V_{c(0)}$.

\begin{theorem}\label{thm:AS}
Let $\cal F$  be a Riemannian foliation with bounded holonomy and positive vertizontal curvature. Then, for any horizontal geodesic $c:\bb R\to M$, $\cal C(c)=\cal V_{c(0)}$.
\end{theorem}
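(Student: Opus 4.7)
The plan is to argue by contradiction: assume that $\cal C(c)$ is a proper subspace of $\cal V_{c(0)}$, pick a unit $\nu_0\in \cal V_{c(0)}$ orthogonal to $\cal C(c)$, and let $\nu$ be the dual holonomy field along $c$ with $\nu(0)=\nu_0$. Since the defining equation $\nabla_{\dot c}\nu=-A^*_{\dot c}\nu+S_{\dot c}\nu$ is linear in $\nu$, uniqueness of ODE solutions forces $\nu(t)\neq 0$ for every $t\in\bb R$.

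The key input from $\cal C(c)$ is Lemma \ref{lem:Anu=0}, which gives $A^*_{\dot c(t)}\nu(t)=0$ for every $t$. Plugging this into the curvature formula of Proposition \ref{prop:KnuX}, I obtain
\begin{equation*}
\tfrac{1}{2}\tfrac{d^2}{dt^2}\|\nu(t)\|^2 \;=\; K(\dot c(t),\nu(t))+3\|S_{\dot c(t)}\nu(t)\|^2.
\end{equation*}
Under the hypothesis of positive vertizontal curvature, the right-hand side is strictly positive for every $t$, so $f(t):=\|\nu(t)\|^2$ is a strictly convex function on all of $\bb R$.

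Now the bounded holonomy hypothesis enters through Lemma \ref{lem:bound}: there is a constant $\bar L>0$, independent of $t$, such that $f(t)\leq \bar L$ for every $t\in\bb R$. A convex function on $\bb R$ that is bounded above must be constant; indeed, if $f(a)<f(b)$ for some $a<b$, iterating the convexity inequality $f(b+n(b-a))\geq f(b)+n(f(b)-f(a))$ drives $f$ to $+\infty$. Thus $f$ is constant, so $f''\equiv 0$, contradicting strict convexity. Hence no such $\nu_0$ can exist and $\cal C(c)=\cal V_{c(0)}$.

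The only delicate point is verifying that the hypotheses really apply along the entire real line: this needs $c$ to be defined on all of $\bb R$ (which is in the statement, and is available because horizontal geodesics in a complete manifold extend indefinitely), that the dual holonomy field $\nu$ exists globally (granted by the linearity of the defining ODE along $c$), and that the boundedness in Lemma \ref{lem:bound} applies uniformly to dual holonomy fields along arbitrarily long horizontal curves, which is exactly what bounded holonomy provides. Once these are in place the contradiction is immediate, and no Morse-theoretic or groupoid machinery is needed beyond the curvature identity and the convexity argument.
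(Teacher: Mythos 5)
Your proof is correct and follows essentially the same route as the paper: contradiction via a dual holonomy field $\nu$ with $\nu_0\perp\cal C(c)$, Lemma \ref{lem:Anu=0} to kill the $A$-term in Proposition \ref{prop:KnuX}, and strict convexity of $\|\nu(t)\|^2$ versus the bound from Lemma \ref{lem:bound}. You merely spell out two details the paper leaves implicit (that $\nu(t)$ never vanishes by ODE uniqueness, and why a bounded convex function on $\bb R$ must be constant).
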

\begin{proof}
Let $\nu$ be a non-zero dual holonomy field as in lemma \ref{lem:Anu=0}. Using \eqref{eq:K}, we have 
\[\frac{1}{2}\frac{d^2}{dt^2}||\nu(t)||^2=K(\dot c,\nu(t))+3||S_{\dot c}\nu(t)||^2.\]
Since $K(\dot c,\nu(t))>0$,   $||\nu(t)||^2$ is unbounded, contradicting the hypothesis.
\end{proof}

We believe that Theorem \ref{thm:AS} might help proving Wilhelm's conjecture.
\vspace{0.2cm}

Based on the nomenclature used for principal bundles, we call a foliation  \textit{irreducible }if it has only one dual leaf.

\subsection{Bounded holonomy as a generalization of compact holonomy}
Here, we present a characterization shared by both Riemannian submersions with compact holonomy groups and foliations with regular bounded holonomy.

\begin{theorem}\label{thm:2}
Let $\cal F$  be an irreducible Riemannian foliation with regular holonomy on a complete manifold $(M,g_0)$. Then  $\cal F$ has bounded holonomy if and only if $M$ admits a metric $g_1$ such that, for every $X\in \cal H$ and $\xi\in \cal V$,
\[g_1(X+\xi,X+\xi)=g_0(X,X)+g_1(\xi,\xi),\]
and all leaves are totally geodesic. 
\end{theorem}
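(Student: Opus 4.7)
\textbf{Plan for the proof of Theorem \ref{thm:2}.} The content of the statement is really the forward direction, where one must construct $g_1$; the backward direction is essentially a reversal of the construction together with an elementary compactness argument on $H_p$.

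\emph{Forward direction $(\Rightarrow)$.} Assume $\cal F$ has bounded holonomy. By Lemma \ref{lem:opbound} the operator norms of all elements of $\cal E$ are bounded by some $L$. Since $\cal E$ is closed under inversion, the subgroup $H_p\subset GL(\cal V_p)$ is bounded in operator norm, so its closure $\overline{H_p}$ is a compact Lie subgroup of $GL(\cal V_p)$. Averaging $g_0|_{\cal V_p}$ with respect to Haar measure on $\overline{H_p}$ produces an $\overline{H_p}$-invariant inner product $g_1|_{\cal V_p}$, which is comparable to $g_0|_{\cal V_p}$ with constants depending only on $L$. Using the irreducibility of $\cal F$, so that $L^\#_p=M$, for every $q\in M$ I pick any $h\in\cal E_p$ with $\tau(h)=q$ and set
\[
g_1|_{\cal V_q}(v,w) := g_1|_{\cal V_p}\bigl(h^{-1}v,h^{-1}w\bigr).
\]
$H_p$-invariance of $g_1|_{\cal V_p}$ makes this independent of the choice of $h$. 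Declaring $g_1|_{\cal H} := g_0|_{\cal H}$ and keeping $\cal H\perp\cal V$ gives a pointwise-defined $(0,2)$-tensor. Smoothness is where the regular holonomy hypothesis enters: since $\cal E$ is a manifold and $\tau_p:\cal E_p\to M$ is a smooth principal $H_p$-bundle, it admits smooth local sections $s:U\to \cal E_p$, and the formula above with $h=s(q)$ shows $g_1$ is smooth on $U$. Finally, given any holonomy field $\xi$ along a horizontal curve $c$, Proposition \ref{prop:hol} gives $\xi(t)=\hat c(t)\,\xi(0)$, and the very definition of $g_1$ forces $g_1(\xi(t),\xi(t))\equiv g_1|_{\cal V_p}(h_0^{-1}\xi(0),h_0^{-1}\xi(0))$ for a suitable $h_0$, which is constant; hence $S^{g_1}\equiv 0$ and the leaves are $g_1$-totally geodesic.

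\emph{Backward direction $(\Leftarrow)$.} Assume the metric $g_1$ exists. Because $\cal H$ is unchanged, the groupoid $\cal E$ computed for $(M,g_0)$ and for $(M,g_1)$ coincides, and holonomy fields are the same vertical fields along horizontal curves. Totally geodesic leaves in $g_1$ force $S^{g_1}\equiv 0$, so $g_1(\xi(t),\xi(t))$ is constant for every holonomy field, i.e.\ every $h\in\cal E$ is a $g_1$-isometry. In particular $H_p$ embeds into the compact group $O(\cal V_p,g_1|_{\cal V_p})$, so $H_p$ is bounded in every norm on $\cal V_p$. The pointwise inner products $g_0|_{\cal V}$ and $g_1|_{\cal V}$ are related by a smooth positive self-adjoint tensor $T$; combining the $g_1$-isometry property of $h$ with the control on $T$ (via the $\overline{H_p}$-equivariance of the transport) yields the $g_0$-operator bound of Lemma \ref{lem:opbound}, which by the same lemma is equivalent to bounded holonomy.

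\emph{Main obstacle.} The delicate step in the forward direction is not the algebraic construction but the smooth dependence of $g_1|_{\cal V}$ on the basepoint; this is exactly why the paper isolates ``regular holonomy'' as a hypothesis, since without a manifold structure on $\cal E$ one only has a pointwise-defined tensor. The subtlety in the backward direction is going from a $g_1$-isometric action to a $g_0$-uniform bound, which is automatic on compact $M$ but in the complete case requires the comparison between $g_0|_{\cal V}$ and $g_1|_{\cal V}$ to be controlled along the orbits of $\cal E$; irreducibility and compactness of $\overline{H_p}$ are what make this go through.
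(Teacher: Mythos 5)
Your proposal follows essentially the same route as the paper's proof: compactness of $\overline{H_p}$ from the operator bound, an $H_p$-invariant inner product on $\cal V_p$ transported over $M$ via $\cal E_p$ (well-defined by invariance, smooth by regular holonomy), constancy of $g_1$-inner products of holonomy fields giving $S^{g_1}\equiv 0$, and conversely the $g_1$-isometry property of all $h\in\cal E$ yielding the bound. The only difference is cosmetic (Haar averaging and local sections made explicit where the paper invokes them implicitly), so there is nothing substantive to add.
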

\begin{proof}
Suppose that $\cal F$ has bounded holonomy. Then, the closure of $H_p$ on $\rm{Iso}(\cal V_p,\cal V_p)$ is compact and we can endow $\cal V_p$ with an $H_p$-invariant inner product $\lr{,}$. Let $\tau_p^*\cal V\to \cal E_p$ be the pull-back of $\pi:\cal V\to M$ and define the following bundle metric on $\tau_p^*\cal V$:
\[\lr{\xi,\eta}_h=\lr{h^{-1}\xi,h^{-1}\eta}.\]
It descends to a unique bundle metric $\hat g_1$ on $\cal V$: for $\xi,\eta\in\cal V_q$, and $h\in \tau_p^{-1}(q)$, let 
$\hat g_1(\xi,\eta)=\lr{\xi,\eta}_h$. To see that $\hat g_1$ is well defined, choose $k,h\in\tau_p^{-1}(q)$ and observe that
\[\lr{\xi,\eta}_h=\lr{h^{-1}\xi,h^{-1}\eta}=\lr{h^{-1}kk^{-1}\xi,h^{-1}kk^{-1}\eta}=\lr{\xi,\eta}_k,\]
where the last inequality follows since $h^{-1}k\in H_p$ and $\lr{,}$ is $H_p$-invariant. $\hat g_1$ is smooth, since it extends smoothly to $\tau_*\cal V\to \rm{Aut}(\cal V)$. Moreover, every element of $\cal E_p$ is tautologically an isometry with respect to $\hat g_1$: 
\[\lr{h\xi,h\eta}_{hk}=\lr{k^{-1}h^{-1}h\xi,k^{-1}h^{-1}h\eta}=\lr{\xi,\eta}_k.\]
Recalling Proposition \ref{prop:hol}, item (2), we conclude that any pair of holonomy fields have constant inner product in the metric $g_1(X+\xi,X+\xi)=g_0(X,X)+\hat g_1(\xi,\xi)$. This proves that the $S$-tensor vanishes in $g_1$.

The converse is also valid: if $M$ admits a metric where all leaves are totally geodesic, all $h\in \cal E$ are isometries. Therefore, the collection of elements in all $H_p$ have uniformly bounded norms,  since each $H_p$ is bounded and $H_q=hH_ph^{-1}$ for any $h$ such that $h(p)=q$.
\end{proof}

Although Theorem \ref{thm:2} seems quite strong, it may be the case that bounded holonomy only occurs with regular holonomy. Such behavior is present in the context of Riemannian submersions, where compact holonomy implies a smooth structure on $\cal E$ (from the proof of \cite[Theorem 2.7.2]{gw}).
	
Theorem \ref{thm:2} also suggests a possible interplay between bounded holonomy, regularity of the holonomy and smoothness of $H_p$. We close this section conjecturing how the relations between these structures might be.

\begin{conj}
A Riemannian foliation has regular holonomy if and only if $H_p$ inherits a Lie structure from the inclusion $H_p\subset \rm{Iso}(\cal V_p,\cal V_p)$.
\end{conj}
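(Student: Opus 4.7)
The plan treats the two implications separately, using the groupoid apparatus of section \ref{sec:holbun}. Throughout, $(\sigma,\tau):\rm{Aut}(\cal V)\to M\times M$ denotes the canonical smooth submersion whose fibers are $\rm{Iso}(\cal V_p,\cal V_q)$.

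For the forward direction, suppose $\cal E$ inherits a submanifold structure from $\rm{Aut}(\cal V)$. The first step is to promote this inclusion to a Lie subgroupoid structure, i.e., to argue that $\sigma|_{\cal E}$ and $\tau|_{\cal E}$ are themselves submersions. This should follow from the existence of smooth local sections built from the horizontal lifts $\hat c$ of section \ref{sec:holbun}, combined with the closure of $\cal E$ under groupoid multiplication and the inclusion of the identity section $p\mapsto\rm{id}_{\cal V_p}$. Once this is established, $\cal E_p=\sigma^{-1}(p)\cap\cal E$ is a smooth submanifold of $\cal E$, and hence $H_p=\cal E_p\cap\tau^{-1}(p)$ is a smooth submanifold of the Lie group $\rm{Iso}(\cal V_p,\cal V_p)$. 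Being simultaneously a smooth submanifold and an abstract subgroup, $H_p$ is automatically a Lie subgroup (multiplication and inversion, smooth on the ambient Lie group, restrict smoothly).

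For the reverse direction, suppose $H_p\subset \rm{Iso}(\cal V_p,\cal V_p)$ is a Lie subgroup. I would build a smooth atlas on $\cal E$ near an arbitrary $h_0\in\cal E$ with $\sigma(h_0)=p$ and $\tau(h_0)=q$. Because $\cal F$ is locally a Riemannian submersion, the maps $\psi:U\times[0,1]\to M$ of section \ref{sec:holbun} furnish smooth local sections $s_\sigma$ and $s_\tau$ of the source and target maps through $h_0$. Combining these with the tautological right action of $H_p$ yields
\begin{equation}
\Phi:U_\sigma\times U_\tau\times H_p^{\circ}\to\rm{Aut}(\cal V),\qquad (p',q',g)\longmapsto s_\tau(q')\,h_0\,g\,s_\sigma(p')^{-1},
\end{equation}
which is smooth by smoothness of $s_\sigma,s_\tau$ and of the Lie structure on $H_p$, and whose image lies in $\cal E$ by groupoid closure. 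Checking that $\Phi$ is a local diffeomorphism onto its image furnishes a chart near $h_0$, and the cocycle condition on overlaps reduces to the statement that two different local horizontal lifts of the same curve differ by an element of $H_p$, where smoothness is now guaranteed.

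The principal obstacle is the reverse direction: although the above recipe is standard Lie groupoid folklore, turning it into a proof requires verifying that the manifold topology produced by the charts $\Phi$ agrees with the subspace topology inherited from $\rm{Aut}(\cal V)$, and that transition functions between overlapping charts are genuinely smooth. A delicate related point is that the dual leaves $L^\#_p$ are a priori only immersed submanifolds of $M$, so making $\tau_p:\cal E_p\to L^\#_p$ smooth --- rather than set-theoretic --- requires understanding how horizontal holonomy transformations along different curves compare modulo $H_p$. It is precisely this kind of regularity subtlety that presumably warrants phrasing the statement as a conjecture rather than a theorem.
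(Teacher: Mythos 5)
A point of order before anything else: the statement you are proving is one of the two closing conjectures of the paper; the author gives no proof, so there is no argument in the paper to compare yours against, and your proposal has to stand entirely on its own. As written it does not --- it is an outline whose two hard steps are exactly the ones you defer. In the forward direction, the claim that $\sigma|_{\cal E}$ and $\tau|_{\cal E}$ are submersions does not follow from $\cal E$ being a submanifold of $\rm{Aut}(\cal V)$ together with the existence of the lifts $\hat c$: the curve $t\mapsto \hat c(t)h$ fixes the source and moves the target only inside the dual leaf $L^\#_{\sigma(h)}$, so these lifts produce no local sections of $\sigma|_{\cal E}$ and only produce $\tau$-directions tangent to dual leaves. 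Example \ref{example:1} shows that $\sigma|_{\cal E}$ can have fibers of different dimensions, so submersivity is a genuine additional hypothesis, not a formal consequence of regularity. Without it, the assertion that $H_p=\cal E_p\cap\tau^{-1}(p)$ is a submanifold of $\rm{Iso}(\cal V_p,\cal V_p)$ needs a transversality argument you have not supplied. (Your final step --- an abstract subgroup that is an embedded submanifold is a Lie subgroup --- is standard and fine.)

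The reverse direction has a more serious structural problem. The $\tau$-image of $\cal E_p$ is the dual leaf $L^\#_p$, which in general is only an immersed, possibly dense, subset of $M$; correspondingly $(\sigma\times\tau)(\cal E)$ is a union of products of dual leaves. Your chart $\Phi:U_\sigma\times U_\tau\times H_p^{\circ}\to\rm{Aut}(\cal V)$ therefore cannot take $U_\tau$ open in $M$, and if $L^\#_p$ is dense and non-embedded, no manifold structure on $\cal E$ can be induced by the subspace topology of $\rm{Aut}(\cal V)$ --- which is what the paper's definition of regular holonomy demands --- no matter how nice $H_p$ is. So the implication ``$H_p$ Lie $\Rightarrow$ regular holonomy'' would fail without some further hypothesis controlling the dual foliation (compare the irreducibility hypothesis in Theorem \ref{thm:2}), or a weaker reading of ``manifold structure''. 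There are two smaller but real issues as well: $H_p^{\circ}$ parametrizes only one component of the $H_p$-torsor $\tau_p^{-1}(q')$, and an abstract Lie subgroup $H_p\subset\rm{Iso}(\cal V_p,\cal V_p)$ may be immersed but not embedded (a dense winding of a torus), in which case the topology inherited from the inclusion is again the wrong one. Your closing sentence concedes that these regularity questions are the genuine difficulty; that concession is accurate, but it means the proposal is a research plan rather than a proof, which is consistent with the author leaving the statement as a conjecture.
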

\begin{conj}
If a Riemannian foliation has bounded holonomy (and no leaf has infinite fundamental group), then $H_p$ is compact and the holonomy is regular.
\end{conj}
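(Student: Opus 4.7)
The plan is to address both conclusions by first producing a compact Lie group containing $H_p$, then upgrading $H_p$ itself to be closed using the fundamental-group hypothesis, and finally bootstrapping compactness of $H_p$ into regularity of the holonomy. For the first step, I would combine bounded holonomy with Lemma \ref{lem:bound} to conclude that $H_p$ is a uniformly bounded subgroup of $GL(\cal V_p)$, closed under inversion. Its closure $K_p := \overline{H_p}$ is then a compact topological subgroup of $GL(\cal V_p)$, hence a compact Lie group by Cartan's closed subgroup theorem. Averaging any background inner product on $\cal V_p$ against Haar measure on $K_p$ produces a $K_p$-invariant inner product and embeds $K_p \subset O(\cal V_p)$.

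Second, I would use the finite-$\pi_1$ hypothesis to promote $\overline{H_p}$ to $H_p$ itself. The rough idea is to classify closed horizontal loops at $p$ modulo horizontal homotopy, so that two loops producing the same element of $H_p$ can be linked by a smooth family of closed horizontal loops. Combining standard holonomy arguments for Riemannian foliations (in the spirit of Molino's theory) with the Ambrose--Singer-type description from Theorem \ref{thm:AS}, such loops should split into finitely many classes indexed by $\pi_1(L_p)$ together with a continuous family realized by curvature-like deformations along the dual leaf. Finite $\pi_1(L_p)$ would then make the component group of $H_p$ finite, while the identity component is the continuous image of a connected Lie group inside $K_p$, hence closed. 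This forces $H_p$ to be closed in $K_p$ and therefore compact.

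Third, with $H_p$ compact, the forward direction of Theorem \ref{thm:2} goes through without the hypothesis of regular holonomy: the $H_p$-invariant inner product on $\cal V_p$ descends to a bundle metric $\hat g_1$ on $\cal V$ via the tautological formula used there, and the metric $g_1$ with $g_1|_{\cal H} = g_0|_{\cal H}$ and $g_1|_{\cal V} = \hat g_1$ has totally geodesic leaves. Every element of $\cal E$ is then an isometry, so $\cal E$ embeds in $O(\cal V)$. Local triviality of $O(\cal V) \to M \times M$ together with the normal-form theory for Riemannian foliations with totally geodesic leaves should equip $\cal E$ with the structure of a smooth Lie subgroupoid of $\rm{Aut}(\cal V)$, which is precisely regularity of the holonomy.

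The main obstacle is the second step. Horizontal homotopies between horizontal loops are obstructed by the non-integrability of $\cal H$, so two topologically homotopic horizontal loops need not admit a horizontal homotopy, and the reduction of the loop-classification problem to $\pi_1(L_p)$ is subtle. Making this precise seems to require either a neighborhood-level sharpening of Theorem \ref{thm:AS} into a full local-holonomy theorem, or a direct closure argument that extracts, from a sequence of closed horizontal loops whose induced transformations converge, a closed horizontal loop realizing the limit transformation. Either route is delicate, and bridging this gap is essentially what would be needed to turn the statement into a theorem rather than a conjecture.
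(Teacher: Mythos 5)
The statement you are addressing is stated in the paper as a conjecture, with no proof offered; so there is no proof of record to compare against, and the right question is whether your plan would close the problem. It would not, and you essentially say so yourself. Your first step is sound and is already implicit in the paper's proof of Theorem \ref{thm:2}: bounded holonomy makes $H_p$ a bounded subgroup of $\rm{Iso}(\cal V_p,\cal V_p)$ closed under composition and inversion, so its closure $K_p$ is a compact Lie group admitting an invariant inner product. The entire difficulty is in passing from $\overline{H_p}$ to $H_p$, and your second step does not accomplish it. Two concrete problems: (i) the elements of $H_p$ are realized by \emph{horizontal} loops, which are transverse to the leaves, so they are not classified by $\pi_1(L_p)$; the relation between horizontal loops modulo holonomy-trivial deformation and the topology of the leaf is exactly the missing content, and Theorem \ref{thm:AS} (which concerns the span of the $A$-tensor along a single geodesic, under a positive vertizontal curvature hypothesis not available here) does not supply it. (ii) Even granting a decomposition of $H_p$ into finitely many cosets of an identity component, your claim that this component is closed because it is the continuous image of a connected Lie group inside $K_p$ is false: the irrational winding in a torus is such an image and is not closed, and this is precisely the kind of non-closed holonomy the conjecture must rule out.

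Your third step also overreaches. The forward direction of Theorem \ref{thm:2} uses both irreducibility (the tautological metric $\lr{\xi,\eta}_h$ is defined only at points $q$ admitting some $h\in\tau_p^{-1}(q)$, i.e., on the dual leaf through $p$) and regular holonomy (smoothness of $\hat g_1$ is obtained by extending it over $\rm{Aut}(\cal V)$, which presupposes that $\cal E$ sits inside $\rm{Aut}(\cal V)$ as a submanifold). Deriving regularity from compactness of $H_p$ as you propose is therefore circular as written: containment of $\cal E$ in the compact subbundle of isometries does not make $\cal E$ a submanifold, and the normal-form input you invoke is exactly the open point the paper flags when it remarks that bounded holonomy may force regular holonomy. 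In short, your proposal correctly locates the obstruction but does not overcome it; the statement remains a conjecture.
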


\bibliographystyle{alpha}
\bibliography{bib1}

\begin{thebibliography}{GW09}

\bibitem[DS]{DS-A-flat}
C.~Duran and L.~Speran\c{c}a.
\newblock Rigidity of flat sections on non-negatively curved pullback
  submersions.
\newblock {\em Manuscripta Math.}, 147:511--525.

\bibitem[GG]{gg}
D.~Gromoll and K.~Grove.
\newblock The low-dimensional metric foliations of euclidean spheres.
\newblock {\em J. Diff. Geo.}, 28(1):143--156.

\bibitem[GW09]{gw}
D.~Gromoll and G.~Walshap.
\newblock {\em Metric Foliations and Curvature}.
\newblock Birkhäuser Verlag, Basel, 2009.

\bibitem[KN]{knI}
S.~Kkobayashi and K.~Nomizu.
\newblock {\em Foundations of differential geometry}, volume~I.
\newblock Interscience Publishers.

\bibitem[Pet06]{p}
P.~Petersen.
\newblock {\em Riemannian Geometry}.
\newblock Springer, 2006.

\bibitem[PW]{pro-flats}
C.~Pro and F.~Wilhelm.
\newblock Flats and submersions in nonnegative curvature.
\newblock {\em Geometriae Dedicata}, 161:109--118.

\bibitem[SS]{ss-hol}
V.~Schroeder and M.~Strake.
\newblock Volume growth of open manifolds with nonnegative curvature.
\newblock 8:159--165.

\bibitem[Tapa]{tapp-bounded-sub}
K.~Tapp.
\newblock Bounded riemannian submersions.
\newblock {\em Indiana University Mathematics Journal,}, 49:637--354.

\bibitem[Tapb]{tapp-vol-grwoth}
K.~Tapp.
\newblock Volume growth and holonomy in nonnegative curvature.
\newblock {\em Proc. Amer. Math. Soc.}, 127:3035--3041.

\bibitem[VZ]{verdianiziller}
L.~Verdiani and W.~Ziller.
\newblock Concavity and rigidity in non-negative curvature.
\newblock {\em J. Diff. Geom.}, 97:349--375.

\bibitem[Wil]{wilkilng-dual}
B.~Wilking.
\newblock A duality theorem for riemannian foliations in nonnegative sectional
  curvature.
\newblock {\em Geom. Func. Anal.}, 17:1297--1320.

\end{thebibliography}

\end{document}